\def\det{{\mathrm{det}}}
\def\T{{\top}}
\def\type{{\rm prof}}
\def\F{{\mathbb F}}
\def\C{{\mathbb C}}
\def\CC{{\mathbb C}}
\def\NN{{\mathbb N}}
\def\PP{{\mathbb P}}
\newcommand{\ul}{\underline}
\def\cal{\mathcal}
\def\cI{{\cal I}}
\def\piol{{\overline{\pi}}}
\def\piol{{\overline{ \pi}}}
\def\Vol{{\overline{ V}}}
\def\0ol{{\overline{ 0}}}
\def\1ol{{\overline{ 1}}}
\def\2ol{{\overline{ 2}}}
\def\ol2{{\overline{ 2}}}
\def\3ol{{\overline{ 3}}}
\def\4ol{{\overline{ 4}}}
\def\5ol{{\overline{ 5}}}
\def\6ol{{\overline{ 6}}}
\def\7ol{{\overline{ 7}}}
\def\8ol{{\overline{ 8}}}
\def\9ol{{\overline{ 9}}}
\def\P2Skly{\PP^2_{Skly}}
\DeclareMathOperator{\init}{in}
\def\det{\operatorname{det}}
\def\dim{\operatorname{dim}}
\def\max{\operatorname{max}}
\def\min{\operatorname{min}}
\def\rank{\operatorname{rank}}
\def\ul1{\operatorname{\underline{1}}}
\def\l{\leftarrow}
\def\a{\alpha}
\def\l{\lambda}
\def\s{\sigma}
\def\t{\tau}
\def\fS{{\mathfrak S}}
\def\tD{{\tilde D}}
\def\tI{{\tilde I}}
\def\tM{{\tilde M}}
\def\dirlim{\mathop{\vtop{\baselineskip -100pt\lineskip -1pt\lineskiplimit 0pt
\setbox0\hbox{lim}\copy0\hbox to \wd0{\rightarrowfill}}}\limits}
\def\invlim{\mathop{\vtop{\baselineskip -100pt\lineskip -1pt\lineskiplimit 0pt
\setbox0\hbox{lim}\copy0\hbox to \wd0{\leftarrowfill}}}\limits}
\def\I11{{1 \kern -0.8pt \! \mbox{l}}}
\def\mumu{{\mu\kern-4.2pt\mu}}
\def\bfmu{{\mu\kern-4.2pt\mu}}
\def\2slash{\backslash \! \backslash}
\def\boxtimes{\setbox0\hbox{$\Box$}\copy0\kern-\wd0\hbox{$\times$}}
\def\CC{{\mathbb C}} 
\def\D{{\Delta}}
\def\s{{\sigma}}
\def\t{{\tau}}
\def\a{{\alpha}}
\def\fS{{\mathfrak S}}
\def\tD{{\widetilde \Delta}}
\def\tI{{\widetilde I}}
\def\tM{{\widetilde{M}}}
\def\bx{{\mathbf{x}}}
\def\bA{{\mathbf{A}}}
\def\spr{{\rm spr}}
\theoremstyle{definition}
\newtheorem{definition}{Definition}[section]
\newtheorem{example}[definition]{Example}
\theoremstyle{plain}
\newtheorem{theorem}[definition]{Theorem}
\newtheorem{lemma}[definition]{Lemma}
\newtheorem*{corollary*}{Corollary}
\newtheorem{remark}[definition]{Remark}
\title{Universal Gr\"obner Bases of\\ (Universal) Multiview Ideals}
\author{Timothy Duff, Jack Kendrick, Rekha R. Thomas}
\date{\today}
\begin{document}
\begin{abstract}
    Multiview ideals arise from the geometry of image formation in pinhole cameras, and   universal multiview ideals are their analogs for unknown cameras. We prove that a natural collection of polynomials form a universal Gr\"obner basis for both types of ideals using a criterion introduced by Huang and Larson, and include a proof of their criterion in our setting. Symmetry reduction and induction enable the method to be deployed on an infinite family of ideals. We also give an explicit description of the matroids on which the methodology depends, in the context of multiview ideals. 
\end{abstract}

\maketitle

\section{Introduction}

In this paper, we apply a recent universal Gr\"obner basis criterion due to Huang and Larson~\cite{huanglarson} to show that certain sets of polynomials form universal Gr\"obner bases, for two families of ideals from computer vision. 

First, we recover a result for {\em multiview ideals}, originally~\cite[Theorem 2.1]{aholt2013}.

\begin{theorem}\label{thm:multiview}
For an arrangement of $n$ generic pinhole cameras, the associated 2-,3-, and 4-focals form a universal Gr\"obner basis of the multiview ideal $I_n.$
\end{theorem}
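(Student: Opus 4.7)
The plan is to invoke the Huang--Larson universal Gr\"obner basis criterion, whose adaptation to our setting is established in a separate section of the paper. This criterion reduces checking that a finite set $F \subseteq I_n$ is a universal Gr\"obner basis to verifying a combinatorial condition: for every generic weight vector $w$ in the Gr\"obner fan, the initial forms $\{\init_w(f) : f \in F\}$ must generate $\init_w(I_n)$, and this in turn is controlled by a matroid that one must identify explicitly. My first step would be to write down the 2-, 3-, and 4-focals in the polynomial ring on variables $x_{i,j}$ (with $i \in \{1,\ldots,n\}$ indexing cameras and $j \in \{1,2,3\}$ indexing homogeneous image coordinates), and to identify the matroid underlying the focal generators. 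In view of the abstract's promise, this matroid description is itself a deliverable of the proof.

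Next, I would exploit symmetry to shrink the set of weights that must be considered. The symmetric group $S_n$ acts on $I_n$ by permuting the cameras, and the collection of focals is equivariant under this action; it therefore suffices to verify the criterion on $S_n$-orbit representatives of maximal cones of the Gr\"obner fan. A further reduction comes from the fact that the leading monomial of any $k$-focal depends only on the $k$-element subset of cameras that it involves, so the matroidal data for the full arrangement is assembled from those of its size-$k$ sub-arrangements with $k \le 4$.

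The third step is induction on $n$. The base cases $n \in \{2,3,4\}$ can be handled by direct inspection of the 2-, 3-, and 4-focals themselves, falling back on the matroidal structure in each case. For the inductive step, because every focal involves at most four cameras, one would show that an element of $I_n$ whose leading monomial is not divisible by any leading focal monomial would, upon restricting to an appropriate subset of at most four cameras, contradict the Huang--Larson criterion already verified at that level. Genericity of the camera arrangement enters here to ensure that projections and restrictions preserve the matroid type.

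The main obstacle, I expect, is the explicit combinatorial description of the matroid on the image variables and the verification that the $k$-focals realize all of its circuits needed by the Huang--Larson criterion. Concretely, one must characterize precisely which monomials can appear as leading terms of elements of $I_n$ under generic weights, and match these against leading terms of focals; this amounts to a careful case analysis driven by the structure of the matroid restricted to subsets of at most four cameras. Once the matroid is pinned down and the criterion is verified in the base cases, the symmetry-plus-induction machinery should propagate the conclusion to all $n$.
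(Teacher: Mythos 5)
Your high-level plan (Huang--Larson criterion, identify a matroid, reduce by $S_n$-symmetry, induct on $n$ with a small base case) is the same skeleton the paper uses, but you have the Huang--Larson criterion itself wrong, and this error propagates into both your base-case check and your inductive step. The criterion (Theorem~\ref{thm:huang-larson}) is not ``check that $\{\init_w(f)\}$ generates $\init_w(I_n)$ for all generic $w$''---that is just the definition of a universal Gr\"obner basis restated. The criterion says: form the simplicial complex $\Delta(f_1,\dots,f_r)$ on the variables whose \emph{nonfaces} are generated by the supports (``spreads'') of the candidate polynomials; then check that for each facet $U$ the coordinate projection $\piol_U$ of the closure of $V$ in $(\PP^1)^N$ onto $(\PP^1)^U$ is surjective. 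Equivalently (Lemma~\ref{lem:project-to-C}, Lemma~\ref{lem:nothing supported on U}), for each facet $U$ the affine projection $\pi_U : V \to (\CC^1)^U$ must be dominant, i.e.\ $\cI(V) \cap \CC[x_{ij} : ij \in U] = 0$. So the verification task is to prove dominance of finitely many coordinate projections of the affine cone $M_n$ over the multiview variety; there is no leading-term analysis to do at all once the criterion is invoked.

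Because of this, your inductive step cannot work as stated. The paper does not argue that a bad leading monomial of an element of $I_n$ ``restricts to'' a contradiction with at most four cameras; no such restriction argument is given or needed. Instead, the paper first establishes (Theorem~\ref{thm:facet-types}) that up to the $S_n$-action every facet of $\Delta_n$ has profile $(3,2,1,1,\dots,1)$ or $(2,2,2,1,\dots,1)$, so only two facets per $n$ need checking. The induction is then a direct geometric lift: given a generic target point $p \in (\CC^1)^U$ with $U$ one of these facets, the $n$th camera contributes exactly one coordinate $x_{nj}$ to $U$; drop it, use the inductive hypothesis to produce a preimage $a \in M_{n-1}$ of the truncated point under $\pi_W$, take the world point $x(a) \in \CC^4$ behind $a$, image it through the $n$th camera $A_n$, and rescale using the cone structure of $M_n$ to hit the prescribed $j$th coordinate. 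Genericity of the cameras is used only to ensure the $j$th coordinate of $A_n x(a)$ is nonzero. The base case is $n=4$ alone (your $n=2,3$ are not facet-profile stable and are not used), and it is settled by direct machine computation, not ``inspection of the focals.'' In short: the missing idea is that Huang--Larson converts the universal Gr\"obner basis question into a question about dominance of coordinate projections, and the induction is a point-lifting argument along those projections---neither of which appears in your proposal.
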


We then extend this result to prove a new result about a universal Gr\"obner basis of {\em universal multiview ideals}, introduced  in~\cite{atlaspaper}.

\begin{theorem}\label{thm:universal-multiview}
    For an arrangement of $n$ unknown cameras, the associated 2-,3-, and 4-focals form a universal Gr\"obner basis of the universal multiview ideal $\tI_n$.
\end{theorem}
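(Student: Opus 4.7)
The plan is to apply the Huang-Larson criterion to $\tI_n$ in direct analogy with the proof of Theorem~\ref{thm:multiview}, now working over the enlarged polynomial ring that adjoins variables for the entries of the $n$ unknown camera matrices to the image coordinates. For each weight vector $w$, the criterion asks us to exhibit a matroid on the monomials such that the initial term of every universal $k$-focal ($k\in\{2,3,4\}$) is a circuit, together with the compatibility conditions established in the paper's version of Huang-Larson. The conceptual shift from Theorem~\ref{thm:multiview} is that universal focals are polynomials whose monomials mix image coordinates and camera-entry variables, so the associated matroids must live on a larger ground set.

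The first structural input I would use is specialization. Setting the camera variables to generic values sends $\tI_n$ onto $I_n$ and carries universal focals to their multiview counterparts. Thus for weight vectors that vanish on the camera entries, the matroid of Theorem~\ref{thm:multiview} (extended trivially in the camera directions) should suffice, and the circuit and initial-ideal checks reduce to ones already made. The second structural input is the $S_n$-symmetry permuting cameras (acting simultaneously on image and camera variables), which collapses the infinite set of weight orders into finitely many orbit representatives. Combined with induction on $n$ and the fact that every focal involves at most four cameras, this confines the verification of the criterion at level $n$ to those focals that involve the newly introduced camera, while the remaining focals are inherited from $\tI_{n-1}$.

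The main obstacle I anticipate is producing the matroid when $w$ puts nonzero weight on some camera variables. In that regime, initial terms of universal focals can be mixed monomials that have no multiview analogue, and new circuits may arise from multilinearity of the focals in the camera entries. One would hope that the correct matroid is a controlled extension of the multiview matroid by a simple matroid reflecting the camera-entry structure, but confirming that every universal focal's initial term is a circuit and that the Huang-Larson compatibility conditions survive is the technical heart of the argument. A further point of care is that the paper's restatement and proof of the Huang-Larson criterion must be robust enough to accommodate the new camera variables alongside the image variables, which is the reason the authors flag that they reprove the criterion in their setting.
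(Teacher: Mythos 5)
Your proposal rests on a misreading of the Huang--Larson criterion, and this misdirects the argument. Theorem~\ref{thm:huang-larson} is not a per-weight-vector condition asking for a matroid in which initial terms are circuits. It is a single geometric condition: form the complex $\Delta(f_1,\ldots,f_r)$ from the \emph{spreads} (supports) of the candidate polynomials---these are independent of any monomial order---and then check that $\piol_U(\Vol)=(\PP^1)^U$ for every facet $U$. The entire advantage of the criterion is that it collapses the quantification over all monomial orders into this one check. So the ``main obstacle'' you identify---constructing a matroid adapted to a weight vector with nonzero weight on camera entries---does not arise, and the specialization you propose (set camera variables to generic constants and invoke the matroid from Theorem~\ref{thm:multiview}) is aimed at a problem the criterion does not pose.

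What the proof actually requires, and what your sketch lacks, is a combinatorial handle on the complex $\widetilde{\Delta}_n$ on the $15n$-element ground set, together with a reduction that tames its much larger set of facets. The paper shows (Lemma~\ref{lem:facets-observations}, Theorem~\ref{thm:generating-facets}) that every facet of $\widetilde{\Delta}_n$ arises from a facet $W$ of $\Delta_n$ via $U_0 = W\cup\{\mathbf A\}$ followed by a chain of swaps $U_\ell = (U_{\ell-1}\cup\{x_{ij}\})\setminus\{a_{ijk}\}$. The crucial reduction step (Lemma~\ref{lem:facets-to-check}) is that dominance of $\pi_{U'}$ implies dominance of $\pi_U$ for such a swap: the freedom to choose the camera entry $a_{ijk}$ can be used to repair the projected image coordinate $x_{ij}$, so every generic target still has a preimage in $\widetilde M_n$. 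Without this lemma you cannot confine attention to facets of the form $W\cup\{\mathbf A\}$, and without that confinement the $S_n$-symmetry reduction and the induction on $n$ you invoke do not land; there are simply too many facets to address directly. The $S_n$-reduction and the inductive step (project to $n-1$ cameras, lift, rescale using that $\widetilde M_n$ is a cone) are in fact as you anticipate and close the argument, but only after the structure of $\widetilde{\Delta}_n$ and the swap reduction are in place.

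Your specialization idea could potentially replace the paper's inductive base-to-general argument for the facets $U=W\cup\{\mathbf A\}$ (a generic camera configuration plus Theorem~\ref{thm:multiview} applied to $\pi_W$ would yield dominance), but it cannot substitute for the swap reduction, which is the genuinely new ingredient needed to pass from the multiview to the universal multiview setting.
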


We define multiview and universal multiview  ideals in Sections~\ref{sec:multiview ideals} and \ref{sec:universal multiview ideals}. Both form families of ideals indexed by $2 \leq n \in \NN$.
The Huang-Larson criterion for a set of polynomials to form a universal Gr\"obner basis is for a fixed ideal, and it relies on an abstract simplicial complex that is generated by the supports of the candidate polynomials. In order to apply this criterion to a family of ideals, we first need a handle on the corresponding family of complexes. We rely on combinatorics, symmetry reduction, and finally induction, to reduce the checks to a small finite number. We also give explicit descriptions of the complexes as transversal matroids.  

This paper is organized as follows. In Section~\ref{sec:background} we give the necessary background on simplicial complexes and matroids, followed by the Huang-Larson criterion for a set of square-free polynomials to form a universal Gr\"obner basis. In our paper we only need to apply this criterion in the special setting where the underlying varieties are irreducible. In Section~\ref{sec:proof HL}, we include a self-contained proof of the Huang-Larson criterion in this special setting. In Section~\ref{sec:multiview ideals} we apply the criterion to multiview ideals and in Section~\ref{sec:universal multiview ideals} to universal multiview ideals. In the former case we recover a known result (Theorem~\ref{thm:multiview}) using this new technique. Using a similar approach we prove  Theorem~\ref{thm:universal-multiview}, a new result which generalizes~\cite[Theorem 3.2]{atlaspaper}, and resolves the first open question in~\cite[\S 8.1]{atlaspaper}.

\section{Background}
\label{sec:background}

\subsection{Simplicial Complexes and Matroids}
In this subsection, we collect some standard combinatorial terminology and facts that play a role in our results.

An \emph{abstract simplicial complex} $\Delta = (S, \mathcal{I})$ consists of a finite ground set $S$ and a downward-closed family of subsets $\mathcal{I}$: if $A \subseteq S $ satisfies $A \in \mathcal{I},$ then we have $B\in \mathcal{I}$ whenever $B\subseteq A$.
The elements of $\mathcal{I}$ are known as the faces of $\Delta $, and inclusion-maximal faces are called \emph{facets}.
The \emph{dimension} of a  face is one less than its cardinality, and the dimension of a complex is the largest dimension over all of its facets.
A complex is \emph{pure} if all of its facets' dimensions are equal.
Two complexes are \emph{isomorphic} if there is a face-preserving bijection between their ground sets.

Next we recall some basic aspects of {\em Stanley-Reisner theory} ---see eg.~\cite[Ch.~1]{miller-sturmfels,stanley} for more details.
The {\em Stanley-Reisner ideal} of a simplicial complex $\Delta$ on $S$ is the monomial ideal %
$J_\Delta$ in $\CC [x_i : i \in S]$ generated by the square-free monomials $x^A := \Pi_{i \in A} x_i$ such that $A$ is a {\em nonface} of $\Delta$, i.e., $A \not \in \Delta$. 
If $\Delta$ is pure then $J_\Delta$ is equidimensional, i.e., all its associated prime ideals have the same dimension.
On the other hand, every ideal $J$ in $\CC[x_i \,:\, i \in S]$ generated by square-free monomials is the Stanley-Reisner ideal of a simplicial complex $\Delta_J$ on $S$; $x^A \in J$ if and only if $A$ is a nonface of $\Delta_J$. The inclusion-minimal nonfaces of $\Delta_J$ are in bijection with the minimal generators of $J$. The simplicial complex $\Delta_J$ is called the {\em Stanley-Reisner complex} of $J$.

A \emph{matroid} $\mathcal{M} = (S, \mathcal{I})$ on a finite ground set $S$ 
is an abstract simplicial complex, known also as an \emph{independence complex}, which satisfies the \emph{augmentation axiom}: if $I_1, I_2 \in \mathcal{I}$ with $\# I_1 < \# I_2,$ then there exists $s \in I_2 \setminus I_1 $ with $I_1 \cup \{ s \} \in \mathcal{I}.$
Basic matroid theory shows that all independence complexes are pure; their faces and facets are known, respectively, as the \emph{independent sets} and the \emph{bases} of the matroid.
The \emph{rank function} $r_{\mathcal{M}}$ of a matroid $\mathcal{M} = (S, \mathcal{I})$ is the set function satisfying
\[
r_{\mathcal{M}} (X) = 
\displaystyle\max \left\{ \# I
\mid 
I \in \mathcal{I}, \, \, I \subseteq X \right\}
\quad 
\quad 
\forall X \subseteq S.
\]
The rank of $\mathcal{M} = (S , \mathcal{I})$ is defined as $r_{\mathcal{M}} (S)$ and equals the size of any basis of $\mathcal{M}$.

\begin{example}\label{ex:uniform}
The \emph{uniform} matroid $\mathcal{U}_{k,n}$ has ground set $[n] := \{ 1, \ldots , n \}$, and its independent sets are all subsets of $[n]$ of size at most $k.$
\end{example}

\begin{example}\label{ex:transversal}
Let $G $ be a bipartite graph on vertex set $S \sqcup T$.
We define the $G$-\emph{transversal matroid on $S$}, denoted $\mathcal{M} [G, S] = (S , \mathcal{I})$, to consist of all subsets $I \subset S$ that can be covered by a matching in $G.$
\end{example}

In general, we say a matroid is uniform (resp.~transversal) if its is isomorphic to some uniform (resp.~transversal) matroid.
Thus, all uniform matroids are transversal: for the complete bipartite graph $K_{k,n}$, we have $\mathcal{U}_{k,n}\cong\mathcal{M} [K_{k,n}, [n]]$

New matroids can be built from old ones using the \emph{matroid union theorem.}
Given $n$ matroids $\mathcal{M}_1 = (S_1, \mathcal{I}_1), \ldots , \mathcal{M}_n = (S_n, \mathcal{I}_n),$ their \emph{union} 
\[
\mathcal{M} = \mathcal{M}_1 \vee \cdots \vee \mathcal{M}_n = (S , \mathcal{I} )
\]
is defined so that $S = S_1 \cup \cdots \cup S_n$ and
\[
\mathcal{I} = \{ I_1 \cup \cdots \cup I_n \mid I_1 \in \mathcal{I}_1, \ldots , I_n \in \mathcal{I}_n \}.
\]
Note that the ground sets $S_i$ need not be disjoint in the definition of matroid union.
When the ground sets are \emph{pairwise disjoint}, meaning $S_i \cap S_j = \emptyset $ whenever $ i\ne j,$ the matroid union is instead called the \emph{direct sum} and denoted $\mathcal{M}_1 \oplus \cdots \oplus \mathcal{M}_n.$
\begin{theorem}\label{thm:matroid-union}
\cite[Theorem 12.3.1]{oxley} 
The union of matroids $\mathcal{M} = \mathcal{M}_1 \vee \cdots \vee \mathcal{M}_n $ is a matroid, whose rank function $r_{\mathcal{M}}$ can be related to the rank functions $r_1, \ldots , r_n$ of $\mathcal{M}_1, \ldots , \mathcal{M}_n$ as follows:
\[
r_{\mathcal{M}} (X) = \displaystyle\min \left\{  \displaystyle\sum_{i=1}^n r_i (Y) + \# (X \setminus Y) \mid Y \subseteq X
\right\}.
\]
\end{theorem}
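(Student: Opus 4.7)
The strategy is to reduce to the case $n=2$ by induction, using the associativity $\mathcal{M}_1 \vee \cdots \vee \mathcal{M}_n = (\mathcal{M}_1 \vee \cdots \vee \mathcal{M}_{n-1}) \vee \mathcal{M}_n$, which is immediate from the definition of the independent sets (once one knows the intermediate union is itself a matroid). Both assertions --- that $\mathcal{M}$ is a matroid, and the rank formula --- then need only be established in the two-matroid case.

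For the claim that $\mathcal{M}_1 \vee \mathcal{M}_2$ is a matroid, I would verify the augmentation axiom as follows. Given $I, J \in \mathcal{I}$ with $\#I < \#J$, fix decompositions $I = I_1 \cup I_2$ and $J = J_1 \cup J_2$ with $I_j, J_j \in \mathcal{I}_j$, chosen to minimize the total size of the symmetric differences of the parts. On the vertex set $I \cup J$, build an \emph{exchange graph} whose edges record the valid one-for-one swaps in each $\mathcal{M}_j$ supplied by the usual matroid basis-exchange property, and run a shortest-augmenting-path argument: minimality of the decomposition precludes shortcut edges and produces an element $s \in J \setminus I$ with $I \cup \{ s \} \in \mathcal{I}$.

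For the rank formula, the upper bound $r_{\mathcal{M}}(X) \leq \min_Y \{r_1(Y) + r_2(Y) + \#(X \setminus Y)\}$ is the easy direction. Given any independent $I \subseteq X$ with decomposition $I = I_1 \cup I_2$ and any $Y \subseteq X$, one has $\#(I \cap Y) \leq r_1(Y) + r_2(Y)$ because $I_j \cap Y$ is independent in $\mathcal{M}_j$ and contained in $Y$, together with the trivial bound $\#(I \setminus Y) \leq \#(X \setminus Y)$; summing these gives the estimate.

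The main obstacle is the matching lower bound, i.e.\ showing the minimum is attained. I plan to take a maximum independent set $I \subseteq X$ in $\mathcal{M}$, define $Y \subseteq X$ as the set of elements \emph{not} reachable from $X \setminus I$ via an augmenting sequence in the exchange graph, and verify directly that $\#I = r_1(Y) + r_2(Y) + \#(X \setminus Y)$. Maximality of $I$ forces every element of $X \setminus Y$ to lie in $I$ (otherwise an augmenting path would enlarge $I$), while the two rank terms account precisely for the split of $I \cap Y$ between $\mathcal{M}_1$ and $\mathcal{M}_2$. This is essentially K\"onig's theorem transported from bipartite graphs to matroids. The extension from $n = 2$ to general $n$ proceeds by induction, applying the $n=2$ rank formula to $(\mathcal{M}_1 \vee \cdots \vee \mathcal{M}_{n-1}) \vee \mathcal{M}_n$ and rearranging the resulting nested minimum; this last step is technical but introduces no new ideas.
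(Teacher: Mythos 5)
The paper itself does not prove this result; it quotes it with a citation to Oxley's textbook. Your route --- a direct verification of the augmentation axiom together with a K\"onig-type min-max argument for the rank formula via an exchange graph --- is the ``matroid partition'' proof associated with Edmonds and Nash-Williams, and it is a valid, more self-contained alternative to the route in Oxley, which instead obtains matroid union from the machinery of submodular functions and induced/transversal matroids applied to the disjoint direct sum $\mathcal{M}_1 \oplus \cdots \oplus \mathcal{M}_n$. So this is a genuinely different proof, not a reconstruction of the reference.

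There is, however, a concrete slip in the lower-bound step. You define $Y$ as the set of elements \emph{not} reachable from $X \setminus I$ in the exchange graph and then assert that maximality forces $X \setminus Y \subseteq I$. But the sources $X \setminus I$ are reachable from themselves, so $X \setminus I \subseteq X \setminus Y$, and the claimed inclusion $X \setminus Y \subseteq I$ would force $X \setminus I \subseteq I$, impossible unless $I = X$. The orientation should be reversed: take $Y$ to be the \emph{reachable} set. Then any $x \notin Y$ is not a source, hence $x \in I$; and the absence of augmenting paths is what forces each $I_j \cap Y$ to span $Y$ in $\mathcal{M}_j$, giving $|I \cap Y| = \sum_j r_j(Y)$ and hence $|I| = \sum_j r_j(Y) + |X \setminus Y|$. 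Separately, the augmentation-axiom sketch (``minimize the symmetric differences \dots minimality precludes shortcut edges'') is too loose to check as written; in the standard development one exchange-digraph lemma does double duty, either producing an augmenting path (which yields the augmentation axiom) or producing the witness set $Y$ above, so the two halves of your argument are best fused into a single lemma rather than run in parallel with different bookkeeping. The associativity reduction and the nested-minimum rearrangement for the induction step (using $|Z \setminus Y| + |X \setminus Z| = |X \setminus Y|$ for $Y \subseteq Z \subseteq X$ together with monotonicity of $r_n$) are both fine.
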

\begin{remark}\label{rem:disjoint-bases}
In general, unions of bases need not be bases.
For example, if $\mathcal{M}_1$ and $\mathcal{M}_2$ are the rank-$1$ uniform matroids on $\{ 1\}$ and $\{ 1,2 \}$, respectively, then
\[
B_1 = B_2 = \{ 1 \} 
\quad 
\Rightarrow 
\quad 
B_1 \cup B_2 = \{ 1 \} \subsetneq \{ 1,2 \},
\]
so $B_1\cup B_2$ is not a basis in the union $\mathcal{M}=\mathcal{M}_1 \vee \mathcal{M}_2.$
However, if $B= B_1 \cup \cdots \cup B_n$ with the $B_i$ pairwise disjoint, then $B$ is a basis for $\mathcal{M}=\mathcal{M}_1 \vee \cdots \vee \mathcal{M}_n,$ as this implies $r_{\mathcal{M} } (B) = \# B,$ with the minimum in~Theorem~\ref{thm:matroid-union} attained at any $Y\subseteq B$. 
\end{remark}

\subsection{Universality Criterion for Gr\"obner Bases}

The paper \cite{huanglarson} gives a criterion for determining whether a set of nonzero square-free polynomials is a universal Gr\"obner basis of an ideal. In this section, we give a brief overview of this criterion and describe our general strategy for proving Theorems \ref{thm:multiview} and \ref{thm:universal-multiview}.

Fix a variety $V\subset \C^N$. For any polynomial $f\in \C[x_1, \ldots, x_N]$, define the {\em spread} of $f$ to be the collection of $i\in [N]$ such that $x_i$ is in the support of $f.$ For a collection of nonzero polynomials $f_1, \ldots, f_r$ let $\D(f_1, \ldots, f_r)$ be the simplicial complex on $[N]$ whose nonfaces are generated by the spreads of $f_1, \ldots, f_r.$ 

There is a natural embedding of $V$ in the multiprojective space $(\PP^1)^N$ obtained by homogenizing each coordinate $x_i$, i.e., sending $x_i \mapsto [x_i:1]$. Let $\Vol$ denote the closure of $V$ inside $(\PP^1)^N.$ Given a subset $U\subset [N],$ consider the projection
\begin{align}
\piol_U:(\PP^1)^N\to(\PP^1)^U \label{eq:Uprojection}\\
\left( [x_1 : x_{N+1}], \ldots , [x_N : x_{2N}] \right) &\mapsto \left( [x_i : x_{i+N} ] \mid i \in U \right). \nonumber 
\end{align}
The following theorem gives a sufficient condition for when $\{ f_1, \ldots , f_r \}$ forms a universal Gr\"{o}bner basis for the vanishing ideal $\cI(V)$.

\begin{theorem}[Theorem 2.7 in \cite{huanglarson}]\label{thm:huang-larson}
    Let $V \subset \CC^N$ be a closed subvariety, and $f_1, \ldots, f_r \in \mathcal{I} (V) \setminus \{ 0 \} $ be square-free  polynomials.
    If $\piol_U(\Vol) = (\PP^1)^U$ for every facet $U$ of $\Delta(f_1, \ldots, f_r)$, then $\{f_1, \ldots, f_r\}$ is a universal Gr\"obner basis of $\mathcal{I} (V).$ 
\end{theorem}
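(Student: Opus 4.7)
My plan is to prove, for every monomial order $\prec$, the equality $\init_\prec \mathcal{I}(V) = J$ where $J = \langle \init_\prec f_1, \ldots, \init_\prec f_r \rangle$. The inclusion $J \subseteq \init_\prec \mathcal{I}(V)$ is immediate since $f_i \in \mathcal{I}(V)$, so the task is the reverse containment. My approach is multi-graded Gr\"obner degeneration inside $(\PP^1)^N$: after bi-homogenizing each $f_i$, the ideals $\mathcal{I}(\Vol)$ and $\init_\prec \mathcal{I}(\Vol)$ share the same multi-graded Hilbert polynomial, equivalently the same class in the cohomology ring $\ZZ[t_1, \ldots, t_N]/(t_1^2, \ldots, t_N^2)$ of $(\PP^1)^N$. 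Since $V$ is irreducible of some dimension $d$, this class has the form $[\Vol] = \sum_{|U| = d} c_U \prod_{i \notin U} t_i$, where $c_U$ is the generic degree of the projection $\piol_U|_{\Vol}$; in particular $c_U \geq 1$ iff $\piol_U$ is dominant.

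The combinatorial heart of the argument is a comparison between $\Delta_J$ and $\Delta(f_1, \ldots, f_r)$. Since the support of each $\init_\prec f_i$ is contained in the spread of $f_i$, the Stanley--Reisner complex $\Delta_J$ is a subcomplex of $\Delta(f_1, \ldots, f_r)$. The surjectivity hypothesis forces every facet of $\Delta(f_1, \ldots, f_r)$ to have size at most $d$, while the containment $J \subseteq \init_\prec \mathcal{I}(V)$ plus dimension-preservation under Gr\"obner degeneration forces the top-dimensional facets of $\Delta_J$ to have size at least $d$. Hence the top-dimensional facets of $\Delta_J$ coincide with the size-$d$ facets of $\Delta(f_1, \ldots, f_r)$. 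Moreover, a size-$d$ subset $U$ with $c_U \geq 1$ must be a facet of $\Delta(f_1, \ldots, f_r)$: otherwise $U$ would contain the spread of some $f_i$, so $f_i$ itself would give a nonzero polynomial in the coordinates indexed by $U$ vanishing on $\Vol$, contradicting $\piol_U(\Vol) = (\PP^1)^U$.

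Matching the class expansion of $[\Vol]$ against the Stanley--Reisner class $\sum_{U \text{ top facet of } \Delta_J} \prod_{i \notin U} t_i$ of $V(J)$, and using the containment $J \subseteq \init_\prec \mathcal{I}(V)$, will force the multi-graded Hilbert polynomials of $R/J$ and $R/\mathcal{I}(V)$ to coincide and hence the two ideals to agree. The main obstacle I foresee is controlling multiplicities: a priori each $c_U$ could exceed $1$ (positive generic degree), while the Stanley--Reisner count assigns multiplicity one to each top facet. Irreducibility of $V$ is the key input that should force $c_U = 1$ for every active $U$: it prevents the Gr\"obner degeneration from acquiring extraneous multiplicities or embedded components beyond what matches the square-free structure of $J$.
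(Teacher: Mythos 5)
Your overall framework—bi-homogenize, compare $J$ with $\init_\prec\mathcal I(\overline V)$, and argue via multidegrees/cohomology classes in the Chow ring of $(\PP^1)^N$—is close in spirit to the paper's route, and your observation that any size-$d$ subset $U$ with $\piol_U$ dominant must be a facet is sound (since all facets have size $\le d$, any size-$d$ face is automatically a facet). However, there are two genuine gaps.

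\textbf{The final inference does not hold.} Equality of multi-graded Hilbert \emph{polynomials} (equivalently, of Chow classes) of $R/J$ and $R/\mathcal I(\overline V)$ does \emph{not} imply $J = \init_\prec \mathcal I(\overline V)$. The Hilbert polynomial only controls large-degree behavior and only sees the top-dimensional cycle; $J$ could be a proper sub-ideal of $\init_\prec \mathcal I(\overline V)$ differing in low degrees or in embedded/lower-dimensional primary components. The additional inputs that close this are exactly what the Conner--Han--Micha\l ek lemma (Lemma 3.1 in their paper, used as the engine of the proof here) packages up: $J$ is square-free, hence \emph{radical}; $\init_\prec \mathcal I(\overline V)$ is \emph{equidimensional} because $\mathcal I(\overline V)$ is prime (Kalkbrener--Sturmfels); and $J$ itself is equidimensional, which in turn requires $\Delta_J$ to be \emph{pure} — a fact your proposal never establishes and which the paper obtains by identifying $\Delta(f_1,\dots,f_r)$ with the algebraic matroid of $V$. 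With radicality of $J$ and equidimensionality of both ideals, the multiplicity count forces $\sqrt{\init_\prec \mathcal I(\overline V)} = J$, and then $J \subseteq \init_\prec \mathcal I(\overline V) \subseteq \sqrt{\init_\prec \mathcal I(\overline V)} = J$ gives the equality of ideals. Without these extra ingredients the Hilbert-polynomial comparison is inconclusive.

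\textbf{Irreducibility does not force $c_U=1$.} The multidegree of an irreducible variety in $(\PP^1)^N$ along a given coordinate direction can certainly exceed $1$; generic-degree-$1$ projections are special, not automatic. What actually forces $c_U = 1$ for the facets is a sandwich of degrees: $J \subseteq \init_\prec \mathcal I(\overline V)$ gives $\deg J \ge \deg \mathcal I(\overline V)$, while van der Waerden's formula $\deg \mathcal I(\overline V) = \sum_{U \in \mathcal B} c_U$ and the hypothesis $c_U \ge 1$ for facets gives $\deg \mathcal I(\overline V) \ge \#\mathcal B = \deg J$; the collapse of this chain to equalities forces every $c_U = 1$. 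This is the argument the paper actually runs, and it is what your proposal is missing. Equivalently, one can phrase it in your language: the containment $\VV(\init_\prec \mathcal I(\overline V)) \subseteq \VV(J)$ together with equidimensionality of both sides makes $[\VV(J)] - [\overline V]$ an effective class, which pins down $c_U \le 1$; but you must first establish that equidimensionality (purity), and the effective-class observation by itself still only gives equality of classes, not of ideals, leaving the first gap open.
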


To apply Theorem~\ref{thm:huang-larson}, we must show that the projection $\piol_U:\Vol\to(\PP^1)^U$ is surjective
for each facet of the simplicial complex $\D(f_1, \ldots, f_r).$ Since our varieties are affine, it will be convenient to instead consider the projections $\pi_U:V\to(\CC^1)^U.$ The following simple lemma expresses the the surjectivity, and hence dominance, of $\piol_U$ in terms of the original affine variety $V$, when $V$ is irreducible.

    \begin{lemma}\label{lem:project-to-C}
    Let $V \subset (\CC^1)^N $ be irreducible. 
    For any subset $U\subset \{ 1, \ldots , N \}$, the projection $\pi_U : V \to (\CC^1)^U$ is dominant if and only if $\piol_U : \overline{V} \to (\PP^1)^U$ is surjective. 
    \end{lemma}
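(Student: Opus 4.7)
The plan rests on three easy observations: (i) the multiprojective variety $\Vol$ is irreducible, being the Zariski closure of the irreducible $V$; (ii) under the identification of $(\CC^1)^N$ with the standard affine chart of $(\PP^1)^N$, we have $V = \Vol \cap (\CC^1)^N$, so $V$ is a dense open subset of $\Vol$; and (iii) on this chart, $\piol_U$ restricts to the affine projection $\pi_U$, once $(\CC^1)^U$ is identified with the corresponding affine chart of $(\PP^1)^U$. Combining these with the fact that $\piol_U : (\PP^1)^N \to (\PP^1)^U$ is a proper morphism (so that $\piol_U(\Vol)$ is closed in $(\PP^1)^U$) reduces each direction of the equivalence to a short topological argument.

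For the forward direction, I would assume $\pi_U$ is dominant, i.e., $\pi_U(V)$ is dense in $(\CC^1)^U$. Since $(\CC^1)^U$ is itself open and dense in $(\PP^1)^U$, the image $\piol_U(V) = \pi_U(V)$ is dense in $(\PP^1)^U$. Because $\piol_U(V) \subseteq \piol_U(\Vol)$ and the latter is closed, taking closures gives $(\PP^1)^U = \overline{\piol_U(V)} \subseteq \piol_U(\Vol)$, and hence equality.

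For the reverse direction, I would use that a continuous map sends dense subsets to dense subsets of the image. Assuming $\piol_U(\Vol) = (\PP^1)^U$, the density of $V$ in $\Vol$ yields density of $\piol_U(V)$ in $(\PP^1)^U$. But $\piol_U(V) = \pi_U(V)$ is contained in the open subset $(\CC^1)^U$, so this subset of the affine chart, being dense in the ambient $(\PP^1)^U$, is also dense in $(\CC^1)^U$. This is exactly the dominance of $\pi_U$.

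No real obstacle is anticipated. The only subtle point is invoking properness of $(\PP^1)^U$ to conclude that $\piol_U(\Vol)$ is closed, together with the clean identification of $V$ with an open dense subset of $\Vol$ on the distinguished affine chart; both are standard facts about multiprojective varieties and the complement of the hyperplane at infinity.
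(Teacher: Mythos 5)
Your proof is correct and follows essentially the same route as the paper's: both arguments rest on the two facts that $V$ (resp.\ $(\CC^1)^U$) is dense in $\Vol$ (resp.\ $(\PP^1)^U$), so that dominance of $\pi_U$ is equivalent to dominance of $\piol_U$, and that $\piol_U(\Vol)$ is closed by properness of multiprojective space (the paper phrases this as the ``projective elimination theorem''). You simply spell out the two implications in more topological detail than the paper's terse diagram argument.
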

    \begin{proof}
        Consider the following diagram:
        $$
        \begin{array}{ccc}
             V & \xhookrightarrow{i_V} & \overline{V} \\
             &&\\
             {\tiny{\pi_U}}{\downarrow} &  & \downarrow {\tiny{\piol_U}}\\
             &&\\
             (\CC^1)^U & \xhookrightarrow{i_U} & (\PP^1)^U
        \end{array}
        $$
        Since the horizontal inclusion maps are birational isomorphisms, the image of $\pi_U$ is dense if and only if the image of $\piol_U$ is. 
        Furthermore, dominance and surjectivity of $\piol_U$ are equivalent by the projective elimination theorem.
    \end{proof}

While~Theorem~\ref{thm:huang-larson} gives a sufficient condition for a collection of polynomials to form a universal Gr\"obner basis of a single ideal, in this paper we wish to prove that a given sequence of sets of polynomials form universal Gr\"obner bases for a nested sequence of ideals $I_n$, indexed by $n \in \NN$. Our general strategy is as follows:
\begin{enumerate}
    \item For each $n$, describe the simplicial complex $\D_n$ needed in Theorem~\ref{thm:huang-larson} for $I_n$, 
    and determine a ``growth rule" as $n$ increases.
    \item Use symmetry to reduce the number of facets that need to be checked in Theorem~\ref{thm:huang-larson}.
    \item Prove a base case via direct computation in Macaulay2~\cite{M2}. 
    \item Then apply induction to prove  the general case.
\end{enumerate}

\section{A proof of~Theorem~\ref{thm:huang-larson} in the irreducible case} \label{sec:proof HL}

In this section, we provide a short proof of~Theorem~\ref{thm:huang-larson} under the assumption that the variety $V$ is irreducible. 
This gives us a self-contained route towards our main results, where this assumption always holds.

Our task is to show that under the hypothesis of Theorem~\ref{thm:huang-larson}, $\{f_1, \ldots, f_r\}$ is a Gr\"obner basis for $\mathcal{I}(V)$ with respect to any monomial order $<$ on $\CC[x_1, \ldots, x_N]$.

Let $\CC[x_1, \ldots, x_N, x_{N+1}, \ldots, x_{2N}]$ be the coordinate ring of $(\PP^1)^N$ where variable $x_i$ is paired with $x_{N+i}$ for $i=1, \ldots, N$. The {\em multi-homogenization} of a polynomial $f \in \CC[x_1, \ldots, x_N]$ is 
$f^h \in \CC [x_1, \ldots ,x_{2N}]$ defined as 
\begin{align}\label{eq:multihomogenization}
f^h(x_1, \ldots , x_N ) = 
x_{N+1}^{\deg_{x_1} (f)} \cdots x_{2N}^{\deg_{x_N} (f)} \cdot 
f (x_1/x_{N+1}, \ldots , x_N / x_{2N}).
\end{align}
The vanishing ideal of $\overline{V}$ in $\CC[x_1,\ldots,x_{2N}]$ is 
\begin{equation}\label{eq:multihomogenized-ideal}
\mathcal{I} (\overline{V}) = \langle f^h \mid f \in \mathcal{I} (V) \rangle.
\end{equation}
In order to show that $\{f_1, \ldots, f_r\}$ is a Gr\"obner basis of $\mathcal{I}(V)$ with respect to any monomial order on $\CC[x_1, \ldots, x_N]$, it suffices to argue that $\{f_1^h, 
\ldots, f_r^h\}$ is a Gr\"obner basis for $\mathcal{I}(\overline{V})$ with respect to any product order such that $x_{j+N} < x_i$ for all $i,j.$ These orders can be viewed as natural extensions of monomial orders on $\CC[x_1, \ldots, x_N]$ to monomial orders on $\CC[x_1, \ldots, x_{2N}].$ If $f_1^h, \ldots, f_r^h$ form a Gr\"obner basis of $\cI(\Vol)$ with respect to $<,$ then $f_1, \ldots, f_r$ form a Gr\"obner basis of $\cI(V)$ with respect to the corresponding monomial order on $\CC[x_1, \ldots, x_N],$ see for example, \cite[Exercise 15.39]{eisenbud}.

Fix such an order $<$ on $\CC[x_1, \ldots, x_{2N}]$ and 
define the monomial ideal %
\begin{equation}\label{eq:stanley-reisner-ideal}
J := \langle 
\init_< (f_1^h) , \ldots , 
\init_< (f_r^h)
\rangle \subset \CC [x_1, \ldots, x_{2N}].
\end{equation}
To show that $f_1^h, \ldots, f_r^h$ form a Gr\"obner basis of $\cI(\Vol)$ with respect to $<,$ 
we will apply the following result of Conner, Han, and Michalek~\cite{conner2023sullivant} to $I = \mathcal{I} (\overline{V}).$

\begin{lemma}[Lemma 3.1 in \cite{conner2023sullivant}]
    Suppose $I$ is a homogeneous prime ideal and $g_1, \ldots, g_r$ are elements of $I.$ Assume the following hold for the monomial order $<:$
    \begin{enumerate}
    \item $\init_< (g_1), \ldots , \init_< (g_r)$ are square-free,
    \item $J:=\langle\init_<(g_1), \ldots, \init_<(g_r)\rangle$ is equidimensional, and 
    \item $\dim I = \dim J$, and $\deg I  = \deg J$.
    Then, $g_1, \ldots, g_r$ is a Gr\"obner basis of $I$ with respect to $<.$
\end{enumerate}
\end{lemma}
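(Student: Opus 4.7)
The plan is to prove $J = \init_<(I)$; since each $g_i \in I$ satisfies $\init_<(g_i) \in \init_<(I)$, the containment $J \subseteq \init_<(I)$ is automatic. Write $J' := \init_<(I)$ and let $R$ denote the ambient polynomial ring. The task reduces to ruling out strict containment $J \subsetneq J'$ via a Hilbert polynomial comparison.

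First, recall that $I$ and $J'$ share the same Hilbert function, so $\dim J' = \dim I$ and $\deg J' = \deg I$. Combined with hypothesis (3), this yields $\dim J = \dim J' =: d$ and $\deg J = \deg J'$. Consequently, the Hilbert polynomials $P_{R/J}$ and $P_{R/J'}$ both have degree $d - 1$ with the same leading coefficient $\deg(I)/(d-1)!$, so their difference has degree at most $d - 2$.

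The heart of the argument is an associated-primes analysis of $R/J$. By hypothesis (1), $J$ is a squarefree monomial ideal, hence radical, so $R/J$ has no embedded primes; by hypothesis (2), every minimal prime of $J$ has dimension $d$. Combining these, every associated prime of $R/J$ has dimension $d$. From the short exact sequence
\[
0 \to J'/J \to R/J \to R/J' \to 0,
\]
$J'/J$ is a submodule of $R/J$, so each of its associated primes also has dimension $d$. Therefore, if $J'/J \ne 0$, then $\dim(J'/J) = d$ and $P_{J'/J}$ has degree exactly $d - 1$ with positive leading coefficient. But additivity of Hilbert polynomials along the sequence gives $P_{J'/J} = P_{R/J} - P_{R/J'}$, which has degree at most $d - 2$ --- a contradiction. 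Hence $J'/J = 0$, and $J = \init_<(I)$, as required.

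The main obstacle is precisely the associated-primes step, which is where the squarefree hypothesis earns its keep. Without squarefreeness, $R/J$ could possess embedded primes of dimension less than $d$, and $J'/J$ could then sit entirely inside those embedded components with a Hilbert polynomial of degree strictly below $d - 1$; the leading-term contradiction would then evaporate. Every other ingredient --- equality of Hilbert functions of $I$ and $\init_<(I)$, additivity of Hilbert polynomials along a short exact sequence, and the relationship between degree, dimension, and leading coefficient of a Hilbert polynomial --- is standard and can be quoted wholesale.
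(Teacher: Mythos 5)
The paper does not actually prove this lemma; it is quoted verbatim (as ``Lemma 3.1 in [conner2023sullivant]'') and used as a black box, so there is no in-paper proof to compare against. That said, your proof is correct, and it is the standard argument for results of this type. You set $J' = \init_<(I)$, observe $J \subseteq J'$, use the fact that $R/I$ and $R/J'$ share a Hilbert function to transfer the dimension and degree equalities from $I$ to $J'$, and then aim to show $J'/J = 0$. The key step is your associated-primes analysis: squarefree implies $J$ radical, radical implies no embedded primes, and equidimensionality forces every minimal prime of $J$---hence every associated prime of $R/J$, hence every associated prime of the submodule $J'/J$---to have dimension $d := \dim R/J$. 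So a nonzero $J'/J$ would be a $d$-dimensional graded module with Hilbert polynomial of degree exactly $d-1$, contradicting the fact that $P_{R/J} - P_{R/J'}$ has degree $\le d - 2$ because the leading terms cancel (same dimension and same degree). You also correctly pinpoint that the squarefree hypothesis is what kills the possibility of $J'/J$ hiding in low-dimensional embedded components. The only cosmetic omissions are that you don't say explicitly that $J'$ is a monomial ideal (so $J'/J$ is graded) or spell out which grading is in play, but these do not affect the validity of the argument.
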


Since the variety $V$ is irreducible by assumption , so is $\Vol$, and thus the vanishing ideal $\cI(\Vol)$ is prime, and also homogenous. By assumption (in Theorem~\ref{thm:huang-larson}), the initial monomials $\init_<(f_1^h), \ldots, \init_<(f_r^h)$ are square-free and so we need only prove that conditions (2) and (3) hold. %
Let $\spr f$ denote the spread of a polynomial $f$.

The following lemma, summarizing two results in \cite{huanglarson}, will be useful for us.  
\begin{lemma} \label{lem:nothing supported on U}
    Suppose $V\subset \C^N$ is a closed subvariety. For a subset $U\subset[N],$ we have that $\piol_U(\Vol)\neq (\PP^1)^U$ if and only if there is a nonzero $f\in \mathcal{I}(V)$ such that 
    $\spr f\subset U.$ Alternatively, $\piol_U(\Vol) = (\PP^1)^U$ if and only if $\mathcal{I}(V)\cap\C[x_i:i\in U] = 0.$
\end{lemma}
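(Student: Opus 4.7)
My plan is to prove the second ``iff'' in the lemma, namely
\[
\piol_U(\Vol) = (\PP^1)^U \iff \cI(V) \cap \CC[x_i : i \in U] = 0,
\]
since the first ``iff'' in the statement is the contrapositive (the existence of a nonzero $f \in \cI(V)$ with $\spr f \subseteq U$ is exactly the failure of this intersection to be zero). The overall strategy will be to pass between the affine picture on $V$ and the multi-homogeneous picture on $\Vol$, using multi-homogenization as defined in~\eqref{eq:multihomogenization} together with the corresponding dehomogenization $g \mapsto g(x_1,\ldots,x_N,1,\ldots,1)$.

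First I would note that the projection $(\PP^1)^N \to (\PP^1)^U$ is proper, since its fibers are the complete variety $(\PP^1)^{[N] \setminus U}$. Therefore $\piol_U(\Vol)$ is closed in $(\PP^1)^U$, and the surjectivity $\piol_U(\Vol) = (\PP^1)^U$ is equivalent to the vanishing of the multi-homogeneous ideal $\cI(\piol_U(\Vol)) \subset R_U$, where $R_U := \CC[x_i, x_{i+N} : i \in U]$. By pullback along $\piol_U$, this ideal equals $\cI(\Vol) \cap R_U$, so the task reduces to proving
\[
\cI(\Vol) \cap R_U = 0 \iff \cI(V) \cap \CC[x_i : i \in U] = 0.
\]

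For $(\Leftarrow)$ in this reformulation, I will take a nonzero $f \in \cI(V) \cap \CC[x_i : i \in U]$, observe that $\deg_{x_j}(f) = 0$ for $j \notin U$, and conclude from~\eqref{eq:multihomogenization} that $f^h$ involves only variables in $R_U$. Then $f^h$ is nonzero and lies in $\cI(\Vol)$ by~\eqref{eq:multihomogenized-ideal}. For $(\Rightarrow)$, I will take nonzero $g \in \cI(\Vol) \cap R_U$ and use that both $\cI(\Vol)$ and $R_U$ are multi-homogeneous to replace $g$ by one of its nonzero multi-homogeneous components, of some multi-degree $(d_1,\ldots,d_N)$ with $d_j = 0$ for $j \notin U$. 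Its dehomogenization $\tilde g := g(x_1,\ldots,x_N,1,\ldots,1)$ then lies in $\CC[x_i : i \in U]$ (because $g \in R_U$) and in $\cI(V)$ (because $V$ sits inside $\Vol$ via the affine chart $\{x_{i+N} = 1\}_{i \in [N]}$).

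The main point that requires care is showing that $\tilde g$ is still nonzero. I will argue this using multi-homogeneity: every monomial of $g$ has the form $c \prod_i x_i^{a_i} x_{i+N}^{d_i - a_i}$ with $0 \le a_i \le d_i$, and setting $x_{i+N} = 1$ sends distinct exponent vectors $(a_i)_{i \in [N]}$ to distinct monomials $\prod_i x_i^{a_i}$, so no cancellation can occur among the monomials of $g$. This produces the desired nonzero element of $\cI(V) \cap \CC[x_i : i \in U]$ and completes the proof of both directions.
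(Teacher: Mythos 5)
Your proof is correct, and it does more work than the paper does: the paper disposes of this lemma entirely by citation, deferring to Lemmas~2.1 and~2.2 of \cite{huanglarson} without giving an argument, so you have supplied a self-contained proof where the authors simply point to the literature. Your reduction of the first biconditional to the contrapositive of the second is valid, since $\spr f \subseteq U$ for a nonzero $f\in\cI(V)$ is exactly membership of such an $f$ in $\cI(V)\cap\CC[x_i:i\in U]$. The overall route — properness of $(\PP^1)^N\to(\PP^1)^U$ giving closedness of $\piol_U(\Vol)$, the translation of surjectivity into vanishing of the multi-homogeneous ideal $\cI(\Vol)\cap R_U$, and passing back and forth via multi-homogenization in~\eqref{eq:multihomogenization} and dehomogenization — is sound and matches what the cited Huang--Larson lemmas accomplish. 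The two technical points you flag are exactly the ones that need care, and you handle both correctly: $f^h$ lands in $R_U$ because $\deg_{x_j}(f)=0$ for $j\notin U$ kills the factors $x_{j+N}^{\deg_{x_j}(f)}$; and setting $x_{i+N}=1$ in a nonzero multi-homogeneous $g$ cannot produce cancellation because the exponent vector $(a_i)$ is reconstructible from $\prod_i x_i^{a_i}$ once the multi-degree $(d_i)$ is fixed. One minor point worth making explicit if you write this up: the step ``$\piol_U(\Vol)=(\PP^1)^U$ iff $\cI(\piol_U(\Vol))=0$'' uses both that the image is closed (so it equals the vanishing locus of its ideal) and that $\cI((\PP^1)^U)=0$ in $R_U$, the latter being a standard but not completely trivial fact about multi-homogeneous coordinate rings.
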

\begin{proof}
    This is an easy consequence of Lemmas 2.1 and 2.2 in \cite{huanglarson}.
\end{proof}

Since the variety $V$ is irreducible, the sets $U\subset [N]$ with $\piol_U(\Vol) = (\PP^1)^U$ 
are precisely the independent sets in an \emph{algebraic matroid}~\cite[\S 6.7]{oxley}, whose rank is $\dim V.$
The bases in this matroid correspond to transcendence bases of rational functions in the set $\{ x_1 , \ldots , x_N \} \subset \CC (V)$.
As in~\cite{rosen2020algebraic}, we call this the \emph{algebraic matroid of} $V.$ 

\begin{lemma} \label{lem:alg matroid}
    Under the hypotheses of Theorem~\ref{thm:huang-larson} (and since $V$ is irreducible), the simplicial complex $\D(f_1, \ldots, f_r)$ is the algebraic matroid of $V.$ 
    In particular, $\D(f_1, \ldots, f_r)$ is a pure complex of dimension $\dim V - 1.$
\end{lemma}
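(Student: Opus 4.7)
The plan is to show that $\Delta(f_1, \ldots, f_r)$ coincides with the algebraic matroid $\mathcal{M}$ of $V$ as simplicial complexes on $[N]$. Once this equality is established, purity and the dimension count come for free: $\mathcal{M}$ is a matroid and therefore pure, and for an irreducible variety $V$ the rank of $\mathcal{M}$ equals the transcendence degree $\dim V$ of $\CC(V)/\CC$, so every facet has cardinality $\dim V$, i.e.\ dimension $\dim V - 1$.

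First I would verify the inclusion $\Delta(f_1, \ldots, f_r) \subseteq \mathcal{M}$. Given a face $U$ of $\Delta(f_1, \ldots, f_r)$, extend it to a facet $U'$ of $\Delta(f_1, \ldots, f_r)$. The hypothesis of~Theorem~\ref{thm:huang-larson} says $\piol_{U'}(\Vol) = (\PP^1)^{U'}$, which by definition of the algebraic matroid means $U' \in \mathcal{M}$. Since independent sets of a matroid are closed under taking subsets, it follows that $U \in \mathcal{M}$ as well.

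For the reverse inclusion $\mathcal{M} \subseteq \Delta(f_1, \ldots, f_r)$, take $U \in \mathcal{M}$ and apply~Lemma~\ref{lem:nothing supported on U} to obtain $\mathcal{I}(V) \cap \CC[x_i : i \in U] = 0$. In particular, each $f_i$ is a nonzero element of $\mathcal{I}(V)$, so its support cannot lie inside $U$; that is, $\spr(f_i) \not\subseteq U$ for every $i$. Equivalently, $U$ contains no generator of the Stanley-Reisner ideal of $\Delta(f_1, \ldots, f_r)$, and thus $U$ is a face of $\Delta(f_1, \ldots, f_r)$.

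There is no substantial obstacle in this argument; essentially all the work has already been done in~Lemma~\ref{lem:nothing supported on U} and the definition of the algebraic matroid. The only minor subtlety is that the Huang-Larson hypothesis directly controls only the facets of $\Delta(f_1, \ldots, f_r)$, but since every face of a finite simplicial complex is contained in some facet, this is enough to pin down the entire complex.
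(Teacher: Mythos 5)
Your proof is correct and takes essentially the same approach as the paper: one inclusion via the Huang--Larson facet hypothesis (combined with downward-closure of the matroid), the other via Lemma~\ref{lem:nothing supported on U}. The only cosmetic difference is that you argue both directions as set inclusions whereas the paper phrases the second direction contrapositively (nonface $\Rightarrow$ dependent), and you spell out the extend-to-a-facet step that the paper leaves implicit.
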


\begin{proof}
    Under the hypotheses of~Theorem~\ref{thm:huang-larson}, every face of $\Delta (f_1,\ldots , f_r)$ is independent in the algebraic matroid of $V$.
    On the other hand, if $U$ is a nonface of $\Delta (f_1,\ldots , f_r)$, then $U$ contains the spread of some $f_i,$ and~Lemma~\ref{lem:nothing supported on U} implies $U$ is dependent.
    Thus, the algebraic matroid of $V$ coincides with the simplicial complex $\Delta (f_1,\ldots , f_r)$. Furthermore, since the bases of a matroid all have the same size, $\D(f_1, \ldots, f_r)$ is a pure simplicial complex.
\end{proof}

Returning to the proof of~Theorem~\ref{thm:huang-larson}, we now prove that condition (2) holds, i.e., that $J = \langle \init_< (f_1^h) , \ldots , \init_< (f_r^h)\rangle$ is equidimensional, using Stanley-Reisner theory. Observe that the Stanley-Reisner complex $\Delta_J$ is closely related to the simplicial complex $\Delta (f_1, \ldots , f_r)$; for any $f \in \CC[x_1, \ldots, x_N]$ there is a bijective correspondence between $\spr f$ in $[N]$ and the support of $\init_<(f^h)$ in $[2N]$. Hence, the nonfaces of $\D_J$ and $\Delta(f_1, \ldots , f_r)$ are in bijection. 
Moreover, by~\cite[Lemma 2.10]{huanglarson}, there is a bijection sending facets $U\in \Delta (f_1, \ldots , f_r)$ of size $k$ to facets $\widetilde{U} \in \Delta_J$ of size $k+N.$ %
    
\begin{example}
For example, if $f=x_1 + x_2x_3 \in \C[x_1,x_2,x_3]$, then $\spr f = \{1,2,3\}$ and $\Delta_{\langle f \rangle}$ has the single generating nonface $\{1,2,3\}$. Double the variables by letting $x_1 \leftrightarrow x_4, x_2 \leftrightarrow x_5, x_3 \leftrightarrow x_6$.
Then $f^h = x_1 x_{5} x_6 + x_4 x_2 x_3$, and for any term order with $\init_< (f^h ) = x_1 x_5 x_6$, the complex $\Delta_{\langle \init_< (f^h)\rangle}$ is generated by the nonface $\{ 1, 5, 6 \}$. The indices $2,3$ in $\{1,2,3\}$ have been replaced by $5,6$ in $\{1,5,6\}$.
We also have the following bijective correspondence between facets:
\begin{align*}
F =\{ 1,2 \} \in \Delta_{\langle f \rangle}
&\leftrightarrow 
\widetilde{F} = \{ 1,2,3,4,5 \} \in \Delta_{\langle \init_< (f)\rangle},\\
F =\{ 1,3 \} \in \Delta_{\langle f \rangle}
&\leftrightarrow 
\widetilde{F} = \{ 1,2,3,4,6\} \in \Delta_{ \langle \init_< (f)\rangle},\\
F =\{ 2,3 \} \in \Delta_{\langle f \rangle}
&\leftrightarrow 
\widetilde{F} = \{ 2,3,4,5 , 6\}\in \Delta_{ \langle \init_< (f) \rangle}.
\end{align*}
\end{example}

\begin{lemma}
    The ideal $J$ is equidimensional.
\end{lemma}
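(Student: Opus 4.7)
The plan is to reduce the claim that $J$ is equidimensional to the purity of the Stanley-Reisner complex $\Delta_J$, and then to deduce this purity from the previously established purity of $\Delta(f_1,\ldots,f_r)$ via the facet bijection already recalled in the text.

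First I would observe that $J$ is genuinely a square-free monomial ideal, hence a Stanley-Reisner ideal: since each $f_i$ is square-free, the multi-homogenization $f_i^h$ defined by (\ref{eq:multihomogenization}) is also square-free, and therefore so is each monomial $\init_<(f_i^h)$. Thus $J$ is the Stanley-Reisner ideal $J_{\Delta_J}$ of its Stanley-Reisner complex $\Delta_J$ on the ground set $[2N]$.

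Next I would assemble the purity of $\Delta_J$ from two ingredients already in hand. By Lemma~\ref{lem:alg matroid}, the simplicial complex $\Delta(f_1,\ldots,f_r)$ is the algebraic matroid of $V$; in particular it is pure of dimension $\dim V - 1$, so every facet of $\Delta(f_1,\ldots,f_r)$ has cardinality $\dim V$. By the bijection recalled just before the statement (from \cite[Lemma 2.10]{huanglarson}), every facet $\widetilde U$ of $\Delta_J$ arises from a facet $U$ of $\Delta(f_1,\ldots,f_r)$ with $\#\widetilde U = \#U + N$. Hence every facet of $\Delta_J$ has the same cardinality $\dim V + N$, i.e.\ $\Delta_J$ is pure of dimension $\dim V + N - 1$.

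Finally, I would invoke the standard Stanley-Reisner fact that a pure Stanley-Reisner complex yields an equidimensional Stanley-Reisner ideal. Concretely, the minimal primes of $J_{\Delta_J}$ are exactly the prime ideals $P_{\widetilde U} = \langle x_i : i \in [2N]\setminus \widetilde U\rangle$ as $\widetilde U$ ranges over the facets of $\Delta_J$; since all facets have the same cardinality, all these minimal primes have the same height $2N - (\dim V + N) = N - \dim V$, and thus define components of the same dimension. This gives the equidimensionality of $J$. The only non-routine input is the facet bijection together with Lemma~\ref{lem:alg matroid}; everything else is standard Stanley-Reisner bookkeeping, so I do not anticipate any serious obstacle.
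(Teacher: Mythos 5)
Your argument is correct and follows the same route as the paper: express $J$ as the Stanley--Reisner ideal of $\Delta_J$, transfer purity from $\Delta(f_1,\ldots,f_r)$ (established via Lemma~\ref{lem:alg matroid}) to $\Delta_J$ through the cardinality-shift bijection of \cite[Lemma 2.10]{huanglarson}, and conclude equidimensionality from purity. The paper simply cites the last implication as a standard fact, whereas you spell out the minimal-prime computation, but the substance is identical.
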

\begin{proof}
As observed in the previous discussion, the ideal $J$ is the Stanley-Reisner ideal of the simplicial complex $\D_J = \D_{\langle \init_< (f_1^h) , \ldots , \init_< (f_r^h)\rangle}.$ Recall that the Stanley-Reisner ideal of a pure simplicial complex is equidimensional and so it suffices to prove that $\D_J$ is pure. 

There exists a bijection mapping each facet $U\in\D_J$ of size $k$ to a facet $\widetilde{U}\in \D_J$ of size $k+N.$ Therefore, since  $\D(f_1, \ldots, f_r)$ is pure, so is $\D_J$.
\end{proof}

We now establish condition (3). 
First we show the  equality $\dim \cI(\Vol) = \dim J.$

\begin{lemma}\label{lem:dim-I=dim-J}
    The equality holds:
    $$\dim \cI(\Vol) = \dim J$$
\end{lemma}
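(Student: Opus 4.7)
The plan is to show both quantities equal $\dim V + N$ by computing them separately.

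I would start with the right-hand side $\dim J$. By the preceding lemma, $J$ is the Stanley--Reisner ideal of a pure simplicial complex $\Delta_J$. Standard Stanley--Reisner theory identifies the Krull dimension of $\mathbb{C}[x_1,\ldots,x_{2N}]/J$ with the largest cardinality of a facet of $\Delta_J$ (see e.g.\ \cite[Ch.~1]{miller-sturmfels}: the minimal primes are indexed by facets, and the minimal prime associated to a facet $F$ is generated by the complement variables, so its quotient has Krull dimension $|F|$). By Lemma~\ref{lem:alg matroid}, the complex $\Delta(f_1, \ldots, f_r)$ is the algebraic matroid of $V$, whose bases all have cardinality $\dim V$. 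Combined with the bijection between facets of size $k$ in $\Delta(f_1,\ldots,f_r)$ and facets of size $k+N$ in $\Delta_J$ recorded above (via \cite[Lemma 2.10]{huanglarson}), this yields $\dim J = \dim V + N$.

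Next I would handle the left-hand side. Interpreting $\dim \mathcal{I}(\overline{V})$ as the Krull dimension of the multi-homogeneous coordinate ring $\mathbb{C}[x_1,\ldots,x_{2N}]/\mathcal{I}(\overline{V})$, this equals the dimension of the affine cone $\widetilde{\overline{V}} \subset \mathbb{C}^{2N}$ cut out by $\mathcal{I}(\overline{V})$. On the open chart $D(x_{N+1}\cdots x_{2N}) \subset (\mathbb{P}^1)^N$, the cone projection $\mathbb{C}^{2N} \dashrightarrow (\mathbb{P}^1)^N$ is a $(\mathbb{C}^\ast)^N$-torsor, so the preimage of $V \subset \overline{V}$ inside the cone is isomorphic to $V \times (\mathbb{C}^\ast)^N$. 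Since $V$ is irreducible and this preimage is open and dense in $\widetilde{\overline{V}}$, we conclude that $\widetilde{\overline{V}}$ is irreducible of dimension $\dim V + N$, so $\dim \mathcal{I}(\overline{V}) = \dim V + N$.

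Combining these two computations gives the desired equality. The only substantive step is the translation between multi-homogeneous Krull dimension and the dimension of the multi-projective variety, which is the standard $(\mathbb{G}_m)^N$-torsor description; the equality $\dim J = \dim V + N$ is then a direct consequence of the Stanley--Reisner dictionary applied to the bijection of facets and the fact (Lemma~\ref{lem:alg matroid}) that $\Delta(f_1, \ldots, f_r)$ is an algebraic matroid of rank $\dim V$.
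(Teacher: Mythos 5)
Your proposal is correct and follows essentially the same route as the paper: both compute $\dim J = N + \dim V$ via the Stanley--Reisner dictionary, the facet-size shift from \cite[Lemma~2.10]{huanglarson}, and Lemma~\ref{lem:alg matroid}, and identify $\dim \mathcal{I}(\overline{V}) = N + \dim V$. The paper simply asserts the latter equality, whereas you helpfully justify it with the $(\mathbb{C}^\ast)^N$-torsor description of the multi-cone, but this is a fuller account of the same fact rather than a different argument.
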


\begin{proof}
Note that  $\dim \cI(\Vol)$ is given in terms of the affine variety $V$ by the equalities
\begin{equation}\label{eq:dim-variety-matroid}
\dim \mathcal{I} (\overline{V}) = 
N + \dim \mathcal{I} (V)
= 
N + \dim (V).
\end{equation}

Since $J$ is the Stanley-Reisner ideal of  $\D_{\langle \init_< (f_1^h),\ldots,\init_< (f_r^h)\rangle},$ we have that
\begin{align*}
    \dim J &= \dim \Delta_{\langle \init_< (f_1^h),\ldots,\init_< (f_r^h)\rangle} + 1\\
&= N + \dim \Delta (f_1, \ldots , f_r) + 1 \tag{\cite[Lemma 2.10]{huanglarson}}\\
&= N + \rank \Delta (f_1, \ldots , f_r) \\
&= N + \dim (V) \tag{by~Lemma~\ref{lem:alg matroid}}\\
&= \dim \mathcal{I} (\overline{V}) \tag{by~\eqref{eq:dim-variety-matroid}} 
\end{align*}
This completes the proof.
\end{proof}

Finally, we show that $\deg \cI(\Vol) = \deg J$. 

\begin{lemma}
    The equality holds:
    $$\deg \cI(\Vol) = \deg J$$
\end{lemma}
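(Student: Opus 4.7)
My plan is to establish that both $\deg J$ and $\deg \cI(\Vol)$ equal the number of bases of the algebraic matroid of $V$, which by~Lemma~\ref{lem:alg matroid} is the simplicial complex $\Delta(f_1,\ldots,f_r)$. For $\deg J$, I will observe that $J$ is a square-free monomial ideal (the Stanley-Reisner ideal of the pure complex $\Delta_J$), so its minimal primes are the coordinate ideals $\fp_{\widetilde F} = \langle x_i : i \notin \widetilde F\rangle$ indexed by facets $\widetilde F$ of $\Delta_J$, each defining a linear subspace of degree $1$ with multiplicity $1$. Thus $\deg J = \#\{\text{facets of }\Delta_J\}$, which by the bijection $F \leftrightarrow \widetilde F$ recalled above equals the number of bases of the algebraic matroid of $V$.

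For $\deg \cI(\Vol)$, the plan is to appeal to the standard-graded specialization of the multidegree polynomial: writing $M_\Vol \in \Z[t_1,\ldots,t_N]$ for the multidegree polynomial of $\Vol \subset (\PP^1)^N$, one has $\deg \cI(\Vol) = M_\Vol(1,\ldots,1)$. I would derive this by expanding the Hilbert series $H_{R/\cI(\Vol)}(s) = K(s,\ldots,s)/(1-s)^{2N}$ near $s = 1$, where $K$ is the $\Z^N$-graded $K$-polynomial whose lowest-degree part (of degree $N-d$, the codimension) is precisely $M_\Vol$. For each basis $U$ of the algebraic matroid, the corresponding coefficient of $M_\Vol$ equals the generic fiber cardinality $\deg(\piol_U : \Vol \to (\PP^1)^U)$, while coefficients associated to non-bases vanish. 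Hence $\deg \cI(\Vol) = \sum_{U\text{ basis}} \deg(\piol_U)$.

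The heart of the proof, and the main obstacle, is to show that $\deg(\piol_U) = 1$ for each basis $U$, by combining the square-freeness of the $f_i$ with the matroid-theoretic notion of fundamental circuits. For each $e \in [N]\setminus U$, the unique circuit of the algebraic matroid contained in $U \cup \{e\}$ is a minimal nonface of $\Delta(f_1,\ldots,f_r)$, and hence equals $\spr f_{i(e)}$ for some index $i(e)$. Since $f_{i(e)}$ is square-free, its multi-homogenization $f_{i(e)}^h$ is multilinear in each pair $(x_j,x_{N+j})$, and its restriction to the fiber $\piol_U^{-1}(p)$ over a generic $p \in (\PP^1)^U$ becomes a nonzero linear form in $(x_e,x_{N+e})$, uniquely determining $[x_e:x_{N+e}] \in \PP^1$. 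As $e$ varies over $[N]\setminus U$, the fiber is pinned to a single point, and the surjectivity of $\piol_U$ supplied by the hypothesis of~Theorem~\ref{thm:huang-larson} guarantees nonemptiness, yielding $\deg(\piol_U) = 1$. An alternative route that avoids invoking the multidegree formula would be to obtain the inequality $\deg J \geq \deg \cI(\Vol)$ from the inclusion $J \subseteq \init_<(\cI(\Vol))$, and then show the reverse inequality via a more direct fiber-counting argument; either way, the fundamental-circuit step is the essential geometric input.
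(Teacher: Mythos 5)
Your proof is correct, but it takes a genuinely different route from the paper. Both arguments begin identically: $\deg J = \#\{\text{facets of }\Delta_J\}$ because $J$ is a radical (square-free monomial) ideal whose components are coordinate subspaces, and this count equals the number $\#\mathcal{B}$ of bases of the algebraic matroid. Both also invoke van der Waerden's formula $\deg\cI(\Vol)=\sum_{U\in\mathcal{B}} \#\piol_U^{-1}(p_U)$. The difference is in how the equality of the two sums is closed. You prove that each generic fiber degree $\#\piol_U^{-1}(p_U)$ equals exactly $1$, via a fundamental-circuit argument: for each $e\notin U$ the fundamental circuit $C(e,U)$ is a minimal nonface, hence equals $\spr f_{i(e)}$ for some $i(e)$, and since $f_{i(e)}$ is square-free its multi-homogenization is linear in the pair $(x_e,x_{N+e})$, pinning $[x_e:x_{N+e}]$ to a unique point after specializing the $U$-coordinates to a generic $p$; nonemptiness comes from the surjectivity hypothesis. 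This directly gives $\deg\cI(\Vol)=\#\mathcal{B}=\deg J$. The paper instead avoids computing fiber degrees exactly by a sandwich: from $J\subseteq\init_<(\cI(\Vol))$ and equidimensionality it gets $\deg\cI(\Vol)=\deg\init_<(\cI(\Vol))\le\deg J$, and from surjectivity it gets each $\#\piol_U^{-1}(p_U)\ge 1$, so $\deg J=\#\mathcal{B}\le\sum_U\#\piol_U^{-1}(p_U)=\deg\cI(\Vol)$; equality follows without ever establishing that the fibers are singletons.

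Your route is more work but yields strictly more information (it establishes $\deg\piol_U=1$ for every basis $U$, whereas the paper only recovers this a posteriori from the chain of inequalities). The one inaccuracy in your write-up is the final remark that ``either way, the fundamental-circuit step is the essential geometric input'': for the sandwich variant that you describe as an alternative --- which is the paper's actual argument --- the fundamental-circuit step is not needed at all, since one only requires each fiber to be nonempty, which is supplied for free by the hypothesis $\piol_U(\Vol)=(\PP^1)^U$.
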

\begin{proof}
The variety $\mathcal{V} (J)\subset (\PP^1)^N$ is a union of linear subspaces indexed by facets,
\[
\mathcal{V} (J) = \displaystyle\bigcup_{\substack{F \in \Delta_{\langle \init_< (f_1^h) , \ldots , 
  \init_< (f_r^h)\rangle}\\F \text{ a facet}}} L_F.
\]
Since $J$ is generated by square-free monomials, it is a  radical ideal and defines a subvariety of $(\PP^1)^{2N-1}$, of degree
\[
\deg (J) = \# \text{facets of } \Delta_J = \# \mathcal{B},
\]
where $\mathcal{B}$ denotes the set of bases in the algebraic matroid of $V.$

Let $I:= \cI(\Vol)$ and $\init_< (I) = \langle \init_< (f^h ) \mid f \in I \rangle$ be the initial ideal of $\cI(\Vol).$ Note that the $\deg  \init_< (I) = \deg I$ and $\dim \init_< (I)  = \dim I= \dim J$ by Lemma~\ref{lem:dim-I=dim-J}. Then, since $J\subseteq \init_< (I),$ 
we have that
\[
\deg I = \deg \init_< (I) \le \deg J = 
\# \mathcal{B}
\le \displaystyle\sum_{U\in \mathcal{B}} \# \piol_U^{-1} \left( p_U \right),
\]
where each $p_U \in (\PP^1)^U$ is a generic point in the codomain of the projection $\pi_U.$
Each term $\# \piol_U^{-1} \left( p_U \right)$ in the sum above is a \emph{multidegree} of the variety $\overline{V} = \mathcal{V} ( \mathcal{I} (\overline{V}))$.
A result of van der Waerden~\cite{vdw} expresses the standard degree $\deg \mathcal{I} (\overline{V})$ as the sum of multidegrees,
\[
\displaystyle\sum_{U\in \mathcal{B}} \# \piol_U^{-1} \left( p_U \right)
=
 \deg (\mathcal{I} (\overline{V})).
\]
(See also~\cite[Proposition 1.7.3]{groebner-geom}, for a more general statement.)
Thus, we have
\[
\deg I \le 
\deg J \le 
\displaystyle\sum_{U\in \mathcal{B}} \# \piol_U^{-1} \left( p_U \right)
=   \deg I
\]
and so $\deg \cI(\Vol) = \deg J$ as desired
\end{proof}

Thus condition (3) is also satisfied, and  $f_1^h, \ldots , f_r^h$ form a Gr\"{o}bner basis for $\mathcal{I} (\overline{V})$ with respect to $<.$

\section{Multiview Ideals}\label{sec:multiview ideals}

 In this section, we define the multiview ideal and the family of polynomials known as %
 $k-$focals, which will form a universal Gr\"obner basis of the multiview ideal. We begin with a brief overview of the necessary background from the literature on computer vision. For further details, we refer the reader to \cite{atlaspaper, aholt2013}.

The standard model of a pinhole camera is a surjective linear map  $\PP^3\dashrightarrow\PP^2$ which is represented up to scale by a $3\times 4$ matrix $A$ of full rank. The camera maps a {\em world point} $q\in \PP^3$ to its {\em image point} $Aq\in\PP^2.$ 

A configuration of $n$ cameras,  represented by the matrices $A_1, \ldots, A_n,$ is {\em generic} if all $4\times 4$ minors of the matrix $\begin{bmatrix}
    A_1^\T & \ldots & A_n^\T
\end{bmatrix}$ are nonzero. We fix a generic configuration of cameras $(A_1, \ldots, A_n).$ The {\em multiview variety} of $(A_1, \ldots, A_n)$ is 
the closure in $\left( \PP^2 \right)^n$ of the image of the rational map,
\begin{align*}
\PP^3 &\dashrightarrow \left( \PP^2 \right)^n \\
q &\mapsto (A_1 q , \ldots , A_n q).
\end{align*}
This is an irreducible variety containing the $n$-tuples of  images of world points $q\in\PP^3.$  
Its vanishing ideal is the multiview ideal associated to cameras $(A_1, \ldots, A_n)$. Equivalently, this is the vanishing ideal of the affine cone over the multiview variety. Although the multiview ideals for $n$ generic cameras will vary with the cameras, the combinatorics governing such ideals will depend only on $n$---thus, abusing notation, we let $I_n$ denote the multiview ideal associated to $n$ generic cameras. %
To mirror the setting of \cite{huanglarson}, we work primarily with the affine cone over the multiview variety, which we denote by $M_n.$

For a subset $\sigma\subset[n]$ of size $k,$ and $x_i = (x_{i1},x_{i2},x_{i3})^\top$, a $k-$focal is a maximal minor of the $3k \times (4+k)$ matrix
\begin{equation} \label{eq:kfocal matrix}
    \begin{bmatrix}
        A_{\s_1} & x_1 & 0 & \cdots & 0 \\
        A_{\s_2} & 0 & x_2 & \cdots & 0 \\
        \vdots & \vdots & \vdots & \ddots & \vdots \\
        A_{\s_k} & 0 & 0 & \cdots & x_k
    \end{bmatrix}.
\end{equation}
The work \cite{aholt2013} proves that the set of all $2-,3-,$ and $4-$focals forms a universal Gr\"obner basis of $I_n.$ We recover this result using Theorem~\ref{thm:huang-larson}.

Note that, since $3k\geq 4+k$ for $k\geq 2,$ a maximal minor is determined by a choice of $4+k$ rows of the above matrix. Thus, each $k-$focal is uniquely determined by a choice of $k$ cameras and a choice of 
$4+k$ coordinates from the $3$-tuples $x_1, \ldots, x_k$. It will be convenient to consider a coarse classification of focals which we call the {\em profile}. For any polynomial $f\in \C[x_1, \ldots, x_n],$ the profile of $f$ is an $n-$dimensional vector $\type(f)$ with each coordinate corresponding to the index of a camera and each entry being the number of coordinates appearing as a variable in $f$, from the corresponding $x_i$.  For example, consider the 2-focal
\begin{equation*}
    f = \det \begin{bmatrix}
        A_1 & x_1 & 0 \\
        A_2 & 0 & x_2
    \end{bmatrix}.
\end{equation*}
We see that $\type(f)=(3,3,0\ldots, 0)$ all three coordinates of $x_1$ and $x_2$ appear as variables in the determinant. Note that if $f$ is a $k-$focal and any entry of $\type(f)$ is equal to 1, then $f$ is a monomial multiple of some focal of lower degree. For example, consider the 3-focal $g$ given by the minor
\begin{equation*}
    g =\det \begin{bmatrix}
        A_1 & x_1 & 0 & 0\\
        A_2 & 0 & x_2 & 0\\
        A_{31} & 0 & 0 & x_{31}
    \end{bmatrix}.
\end{equation*}
Then $\type(g)=(3,3,1,0\ldots, 0)$ and by Laplace expansion we see that $g$ is $x_{31}\cdot f.$ Since any monomial multiple of a focal is a redundant generator of the multiview ideal, we only consider focals $f$ where $\type(f)$ has no entry equal to 1. 

\subsection{The simplicial complex}\label{sec:mv-complex} For $n\geq 4,$ define $\D_n$ to be the simplicial complex whose nonfaces are generated by the spreads of the $2-,3-, 4-$focals of a generic camera configuration $(A_1, \ldots, A_n).$ We determine the dimension and number of facets of $\D_n$ and show that each facet has one of two profiles up to permutation of cameras.

Note that each face $U\in\D_n$ corresponds to the support of some square-free monomial $x^U = \prod_{ij\in U}x_{ij}\in\C[x_1, \ldots, x_n].$ Thus, for $U\in\D_n$ we define its profile as $\type(U) = \type(x^U).$ The following results show that if $U\in\D_n$ is a facet, it has one of two profiles up to permutation of cameras.

First, consider the case where $n=4.$ Up to permutation of cameras, any $2-,3-,$ or $4-$focal has profile $(3,3,0,0),(3,2,2,0),$ or $(2,2,2,2)$. To each $k-$focal $f$ we associate a {\em spread monomial}: the product of all variable $x_{ij}$ in the spread of $f.$ The facets of the simplicial complex $\D_4$ correspond to all square-free monomials in the $12$ variables 
$$x_{11},x_{12},x_{13},x_{21},x_{22},x_{23},x_{31},x_{32},x_{33},x_{41},x_{42},x_{43}$$
with maximal support that are not divisible by any spread monomial. For example, one such monomial is 
$$x_{11}x_{12}x_{13}x_{21}x_{22}x_{31}x_{41}$$
which has profile $(3,2,1,1)$. %
Another possibility is the monomial
$$x_{11}x_{12}x_{21}x_{22}x_{31}x_{32}x_{41}$$
which has profile $(2,2,2,1)$. We show that up to permutation, any facet of $\D_4$ has profile $(3,2,1,1)$ or $(2,2,2,1).$

\begin{lemma}\label{lem:Delta_4} The complex $\Delta_4$ is pure, 6-dimensional and has 648 facets whose profiles are obtained from all permutations of $(3,2,1,1)$ and $(2,2,2,1)$, and choices of image variables from each camera.
\end{lemma}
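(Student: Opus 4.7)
The plan is to characterize the faces of $\Delta_4$ by their profiles $\type(U) = (d_1, d_2, d_3, d_4)$, where $d_i$ counts how many variables from camera $i$ lie in $U$. Up to permuting cameras, the spread of any $2$-focal has profile $(3,3,0,0)$, the spread of any $3$-focal has profile $(3,2,2,0)$, and the unique profile of a $4$-focal spread is $(2,2,2,2)$. Translating containment of a focal spread into conditions on $\type(U)$, one obtains that $U$ is a face of $\Delta_4$ if and only if: (a) at most one coordinate of $\type(U)$ equals $3$; (b) whenever some $d_i = 3$, at most one other $d_j$ satisfies $d_j \geq 2$; and (c) not every $d_i$ is $\geq 2$.

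Next I maximize $\sum d_i$ subject to (a)--(c). If some $d_i = 3$, then (b) confines the remaining three entries to $\{0,1,2\}$ with at most one equal to $2$, giving maximum sum $3+2+1+1 = 7$, attained exactly by permutations of $(3,2,1,1)$. If no $d_i$ equals $3$, then (c) forces at most three entries to equal $2$, giving maximum sum $2+2+2+1 = 7$, attained exactly by permutations of $(2,2,2,1)$. A short extension argument shows that any face with profile summing to less than $7$ admits an enlargement still satisfying (a)--(c), so every facet has size $7$, establishing purity and dimension $6$.

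The facet count is then straightforward enumeration. For profile $(3,2,1,1)$, there are $12 = 4!/2!$ ways to assign the entries to the four cameras; within each assignment the $d = 3$ camera contributes $1$ way, the $d = 2$ camera contributes $\binom{3}{2} = 3$, and each $d = 1$ camera contributes $3$ choices, yielding $12 \cdot 27 = 324$ facets. For profile $(2,2,2,1)$ there are $4 = 4!/3!$ assignments, and within each the $d = 1$ camera contributes $3$ while each of the three $d = 2$ cameras contributes $\binom{3}{2} = 3$, yielding $4 \cdot 81 = 324$ facets. Summing, $\Delta_4$ has $648$ facets.

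The only step requiring genuine care is the equivalence between (a)--(c) and the face condition, in both directions: these three restrictions must rule out every focal spread, and conversely any $U$ that fails to be a face must violate at least one of them. Once that equivalence is checked, the rest reduces to elementary counting and is not a real obstacle.
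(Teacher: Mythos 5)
Your proof is correct and takes essentially the same route as the paper: both characterize facets by their profiles, use that the Stanley--Reisner ideal is generated by all square-free monomials with profiles $(3,3,0,0)$, $(3,2,2,0)$, $(2,2,2,2)$ up to permutation (which requires noting that any such monomial is the spread of an actual focal), and then maximize the profile sum to find $(3,2,1,1)$ and $(2,2,2,1)$, arriving at the same count of $648$. The only stylistic difference is that you phrase the maximization via explicit constraints (a)--(c) rather than the paper's lexicographic-order argument; the substance is identical.
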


\begin{proof}
    Recall that the Stanley-Reisner ideal of $\D_4$ is generated by all square-free monomials  $\prod_{ij\in W}x_{ij}$ where $W$ is a {\em nonface} of $\D_4.$ The nonfaces of $\D_4$ are generated by the spreads of all $2-,3-,$ and $4-$focals and each of these focals has profile $(3,3,0,0), (3,2,2,0)$ or $(2,2,2,2)$ up to permutation of cameras. Therefore, the Stanley-Reisner ideal of $\D_4$ is generated by monomials with profile $(3,3,0,0), (3,2,2,0)$ or $(2,2,2,2)$ up to permutation. Moreover, any focal is uniquely determined by a choice of $k$ cameras and $4+k$  coordinates $x_{ij}$ and so for any permutation and choice of $\bx$ coordinates that could lead to a square-free monomial with profile $(3,3,0,0), (3,2,2,0)$ or $(2,2,2,2),$ it is simple to construct a corresponding $k-$focal. Thus, the Stanley-Reisner ideal of $\D_4$ is generated by all square-free monomials with profile $(3,3,0,0), (3,2,2,0)$ or $(2,2,2,2)$ up to permutation.

    The profile of each facet $U$ of $\D_4$ is a $4$-dimensional vector $\type(U)$ with entries in $\{0, 1, 2, 3\}.$ Up to permutation of cameras, $\type(U)$ may be written in lexicographic order. Since the monomial corresponding to $U$ must not be divisible by any monomial in the Stanley-Reisner ideal, $\type(U)$ can have at most one entry equal to 3. Once the first entry is chosen to be 3, $\type(U)$ must be the largest vector below $(3, 2, 2, 0)$ in lexicographic order.  It follows that $\type(U) = (3, 2, 1, 1).$ If the first entry of $\type(U)$ is 2, then $\type(U)$ is the largest vector below $(2,2,2,2)$ in the lexicographic order and so $\type(U) = (2,2,2,1).$ It follows that any facet of $\D_4$ has profile $(3,2,1,1)$ or $(2,2,2,1).$ On the other hand, any square-free monomial with profile $(3,2,1,1)$ or $(2,2,2,1)$ is not divisible by any monomial in the Stanley-Reisner ideal of $\D_4$ and thus corresponds to a facet of the complex. Thus, the facets of $\D_4$ are in bijective correspondence with the set of all monomials with profile $(3,2,1,1)$ of $(2,2,2,1)$ up to permutation. In particular, $\D_4$ is a pure 6-dimensional simplicial complex.

    By the above discussion, all facets of $\D_4$ have profile $(3,2,1,1)$ or $(2,2,2,1).$ For facets with profile $(3,2,1,1)$ there are $4$ ways to choose the camera that contributes $3$ variables and then $3$ ways to choose the one that contributes $2$ variables which is a total of $12$. The cameras that contribute $2$ or $1$ variables have $3^3$ ways to pick the variables. Therefore, there is a total of $12 \times 27 = 324$ such facets. By a similar argument, for the facet $(2,2,2,1)$, there are $4$ ways to choose the camera that contributes $1$ variable and $3^4$ ways for the various cameras to choose the variables they want to contribute, making another $324$ possibilities. Together, we have $648$ facets, each of dimension $6$.%
\end{proof}

We now consider the general case $n\geq 4.$ Again, we show that up to permutation of cameras the facets of $\D_n$ have  profile $(3,2,1,1,\ldots, 1)$ or $(2,2,2,1,\ldots, 1).$

\begin{theorem}\label{thm:facet-types}
    The simplicial complex $\Delta_n$ for $n \geq 4$ is pure, $(n+2)-$dimensional and has only two profiles of facets up to permutation of cameras and choice of variables in each camera plane. There are $n(n-1) 3^{n-1}$ facets with profile $(3,2,1,1,\ldots, 1)$ and ${n \choose 3} 3^n$ facets with profile $(2,2,2,1,\ldots, 1)$. 
\end{theorem}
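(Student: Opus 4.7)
The plan is to generalize the case analysis in Lemma~\ref{lem:Delta_4}. My first step is to identify the profiles of the generators of the Stanley-Reisner ideal of $\Delta_n$: I would verify that, up to permutation of cameras and choice of image variables, every non-redundant $k$-focal for $k \in \{2, 3, 4\}$ has profile $(3, 3, 0, \ldots, 0)$, $(3, 2, 2, 0, \ldots, 0)$, or $(2, 2, 2, 2, 0, \ldots, 0)$. This comes from inspecting the maximal $(4+k) \times (4+k)$ minors of~\eqref{eq:kfocal matrix}: one must omit $2k - 4$ rows with at most two rows per camera (else the corresponding $x_i$-column is identically zero and the minor vanishes); among such choices, distributions that leave only one row from some camera factor as a monomial $x_{ij}$ times a lower-$k$ focal and are hence redundant. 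The remaining distributions produce exactly the three claimed profiles. Furthermore, every square-free monomial with one of these profiles is realized as the spread of some $k$-focal, so the Stanley-Reisner ideal is generated by all such monomials.

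Next, I would characterize facets by enumerating the maximal profiles $(a_1 \geq a_2 \geq \cdots \geq a_n)$, arranged in decreasing order, that do not coordinate-wise dominate any of the three generator profiles. Domination here corresponds exactly to spread monomial divisibility, so the conditions become: at most one entry equals $3$; if some entry equals $3$, then at most one other entry is $\geq 2$; and at most three entries are $\geq 2$. If $a_1 = 3$, then $a_2 \leq 2$ and $a_3 \leq 1$, with maximality greedily producing $(3, 2, 1, \ldots, 1)$. If $a_1 = 2$, then $a_4 \leq 1$, with maximality producing $(2, 2, 2, 1, \ldots, 1)$. If $a_1 \leq 1$, the profile is not maximal since (for $n \geq 4$) it can be strictly extended to $(2, 1, \ldots, 1)$. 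A direct check confirms that neither candidate profile admits an entry-increase that preserves the face condition, so they are precisely the profiles of the facets. Both have entry-sum $n + 3$, establishing that $\Delta_n$ is pure of dimension $n + 2$.

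The counting is then direct. For profile $(3, 2, 1, \ldots, 1)$, choose the camera contributing $3$ ($n$ options, with all three variables forced), the camera contributing $2$ ($n - 1$ options, with $\binom{3}{2} = 3$ choices of variables), and $1$ of $3$ variables from each of the remaining $n - 2$ cameras, giving $n(n - 1) \cdot 3 \cdot 3^{n-2} = n(n-1)\, 3^{n-1}$. For profile $(2, 2, 2, 1, \ldots, 1)$, choose the $3$ cameras contributing $2$ ($\binom{n}{3}$ options), $2$ of $3$ variables from each ($3^3$ in total), and $1$ of $3$ variables from each of the $n - 3$ remaining cameras, giving $\binom{n}{3} \cdot 3^n$.

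The main obstacle is completing Step~$1$ with full rigor: confirming that the maximal-minor/row-omission analysis truly covers every $2$-, $3$-, and $4$-focal (and no higher-$k$ focal introduces genuinely new spread monomials), and that every square-free monomial with one of the three profiles arises from some such focal. Once Step~$1$ is in hand, the remaining case analysis and counting are straightforward extensions of the $n = 4$ argument and require no new ideas.
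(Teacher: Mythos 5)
Your proposal is correct and follows essentially the same route as the paper: identify the profiles of the Stanley--Reisner generators as (up to permutation) $(3,3,0,\ldots,0)$, $(3,2,2,0,\ldots,0)$, $(2,2,2,2,0,\ldots,0)$, then determine the maximal profiles not dominating any of these, and finally count. Your Step~1 is in fact slightly more explicit than the paper's proof, which simply invokes ``the same argument as in Lemma~\ref{lem:Delta_4}''; your row-omission and redundancy analysis supplies the underlying reason. Your domination/greedy argument in Step~2 is a minor rephrasing of the paper's lexicographic-maximality argument and establishes the same conclusion. One small point: your worry about ``higher-$k$ focals introducing new spread monomials'' is moot here, since $\Delta_n$ is by definition the complex whose nonfaces are generated by the spreads of $2$-, $3$-, and $4$-focals only; that higher focals add nothing new to the ideal is a separate matter (the content of Theorem~\ref{thm:multiview}), not a hypothesis needed for this combinatorial statement.
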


\begin{proof}

    Following the same argument as in Lemma~\ref{lem:Delta_4}, the Stanley-Reisner ideal of $\D_n$ is generated by all square-free monomials with profile one of $$(3,3,0\ldots, 0), (3,2,2,0,\ldots, 0)
    \textup{ or } (2,2,2,2,0,\ldots, 0)$$ 
    up to permutation of cameras.

    Choose a facet $U\in\D_n.$
    As in Lemma~\ref{lem:Delta_4}, $U$ corresponds to a maxmimally supported square-free monomial that is not divisible by any monomial in the Stanley-Reisner ideal. Up to permutation of cameras, we may assume that $\type(U)$ is in lexicographic order. In particular, the first entry of $\type(U)$ is either 2 or 3.

    The non-faces of $\D_n$ are generated by the spreads of all $2-,3-, 4-$focals and so, as in Lemma \ref{lem:Delta_4}, the Stanley-Reisner ideal of $\D_n$ is generated by all monomials corresponding to vectors of the form $(3,3,0,0, 0,\ldots), (3,2,2,0, 0,\ldots)$ or $(2,2,2,2,0\ldots)$ up to permutation of cameras. If the first entry of $\type(U)$ is 3, then $\type(U) = (3, 2, 1, 1, \ldots, 1)$ since this is the maximal element below $(3, 2, 2, 0, \ldots, 0)$ in the lexicographic order. If the first entry of $\type(U)$ is 2, then $\type(U) = (2,2,2,1,1\ldots, 1)$ since this is the maximal element below $(2,2,2,2,0,\ldots,0)$ in the lexicographic order. Thus, every facet of $\D_n$ has profile $(3,2,1,1,\ldots, 1)$ or $(2,2,2,1,\ldots, 1).$ Note that the sum of entries in each of these vectors is $n+3$ and so the simplicial complex $\D_n$ is pure and $(n+2)-$dimensional. Moreover, any square-free monomial with profile $(3,2,1,1\ldots, 1)$ or $(2,2,2,1,\ldots, 1)$ up to permutation of cameras is not divisible by any element of the Stanley-Reisner ideal. It follows that the facets of $\D_n$ are in bijective correspondence with all square-free monomials with profile $(3,2,1,1\ldots, 1)$ or $(2,2,2,1,\ldots, 1)$ up to permutation.

    We now count the number of facets. For facets with profile $(3,2,1,1,\ldots, 1),$ there are $n$ choices of camera contributing 3 variables and $(n-1)$ choices of camera contributing 2 variables, and the remaining cameras each contribute 1 variable. There are 3 choices of variables for each camera contributing either 1 or 2 variables and so in total there are $n(n-1)3^{n-1}$ facets with profile $(3,2,1, 1,\ldots, 1).$ For facets with profile $(2,2,2,1,\ldots, 1),$ there are ${n\choose 3}$ choices of cameras contributing 2 variables and the remaining cameras each contribute one variable. For each of the $n$ cameras there are 3 choices of variables and so in total there are ${n\choose 3}3^n$ facets with profile $(2,2,2,1,\ldots, 1).$
\end{proof}

The simplicial complex $\D_n$ is in fact the independence complex of a matroid, a result that  will follow from~Lemma~\ref{lem:alg matroid} and  the next subsection. Our next result answers the question, \emph{what kind of matroid?}

\begin{theorem}\label{thm:matroid-multiview}
The simplicial complex $\Delta_n$ is a  transversal matroid of rank $n+3$, isomorphic to the union of the uniform matroid $\mathcal{U}_{3,3n}$ on all $x$-variables and the direct sum $\mathcal{U}_{1,n}^{\oplus n}$ over the $n$ subsets $\{ x_{i1}, x_{i2}, x_{i3} \}$.
\end{theorem}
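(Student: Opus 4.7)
The plan is to show that $\Delta_n$ coincides with $\mathcal{M} := \mathcal{U}_{3,3n} \vee \mathcal{U}_{1,n}^{\oplus n}$ as simplicial complexes by matching their facets. Since the facets of $\Delta_n$ are explicitly characterized in Theorem~\ref{thm:facet-types}, and both complexes will be pure of dimension $n+2$, agreement on facets gives equality of simplicial complexes, and hence $\Delta_n$ inherits the transversal matroid structure, rank, and decomposition claimed in the theorem.

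First, I would realize $\mathcal{M}$ explicitly as a transversal matroid $\mathcal{M}[G, S]$, where $S = \{x_{ij} \mid i \in [n], j \in \{1,2,3\}\}$ and $G$ is the bipartite graph whose other side is $T = \{t_1, t_2, t_3\} \sqcup \{s_1, \ldots, s_n\}$, with each $t_k$ adjacent to all of $S$ (contributing the summand $\mathcal{U}_{3,3n}$) and each $s_i$ adjacent only to the camera triple $\{x_{i1}, x_{i2}, x_{i3}\}$ (contributing $\mathcal{U}_{1,n}^{\oplus n}$). Disjoint union of bipartite realizations realizes matroid union, so this $G$ realizes $\mathcal{M}$. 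A maximum matching in $G$ has size $n+3$ (e.g., $s_i \mapsto x_{i1}$ for all $i$, together with $t_1, t_2, t_3 \mapsto x_{12}, x_{13}, x_{22}$), establishing $\operatorname{rank}(\mathcal{M}) = n+3$.

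Next I would describe the bases of $\mathcal{M}$. By the matching characterization of transversal matroids, every basis $B$ arises from a perfect matching from $B$ to $T$, which partitions $B = B_1 \sqcup B_2$, where $B_1$ (matched to $\{t_1, t_2, t_3\}$) is an arbitrary 3-element subset of $S$ and $B_2$ (matched to $\{s_1, \ldots, s_n\}$) contains exactly one variable from each camera triple. Conversely, any such disjoint union is a basis (cf.\ Remark~\ref{rem:disjoint-bases}). The key combinatorial step is then a case analysis on how $B_1$ distributes across cameras: if its three elements come from three distinct cameras $i_1, i_2, i_3$, disjointness with $B_2$ forces $B$ to contain exactly 2 variables from each of $i_1, i_2, i_3$ and 1 from every other camera, yielding profile $(2,2,2,1,\ldots,1)$; if $B_1$ contributes 2 to one camera $i_1$ and 1 to another $i_2$, disjointness forces $B_2|_{i_1}$ to be the third variable of camera $i_1$, giving profile $(3,2,1,\ldots,1)$; and the case of $B_1$ entirely within one camera is impossible, since $B_2|_{i_1}$ would have to be a fourth variable in a 3-element triple.

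These are precisely the two profile classes in Theorem~\ref{thm:facet-types}, and conversely every square-free monomial with one of those profiles admits such a decomposition, so the facets of $\mathcal{M}$ coincide with those of $\Delta_n$. Since both complexes are pure of dimension $n+2$, equality follows. I do not expect a serious obstacle: the entire argument reduces to setting up the right transversal realization and running the short case analysis, and the rank and facet counts come out automatically once the profiles match.
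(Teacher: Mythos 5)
Your proof is correct, and it is in some ways tighter than the paper's own argument. The paper's proof starts from a facet $U$ of $\Delta_n$, peels off a transversal $B$ (one $x_{ij}$ per camera) to land in $\mathcal{U}_{1,n}^{\oplus n}$, observes that $U\setminus B$ is a basis of $\mathcal{U}_{3,3n}$, and says "a similar proof" handles general faces; it then cites closure of transversal matroids under union. As written, that establishes only the containment $\Delta_n \subseteq \mathcal{U}_{3,3n}\vee\mathcal{U}_{1,n}^{\oplus n}$ and leaves the reverse inclusion implicit. You instead go through the explicit bipartite realization (which the paper relegates to a remark after the theorem), use it to characterize every basis $B$ of the union as a disjoint partition $B_1 \sqcup B_2$ with $B_1$ a $3$-subset matched to $\{t_1,t_2,t_3\}$ and $B_2$ a transversal of the camera triples, and then run the case analysis on how $B_1$ distributes across cameras. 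That case analysis is exactly right (three distinct cameras gives profile $(2,2,2,1,\ldots,1)$; a $2{+}1$ split gives $(3,2,1,\ldots,1)$; all three in one camera is infeasible), and together with the easy converse it pins down the bases of the union as precisely the facets identified in Theorem~\ref{thm:facet-types}. Combined with purity on both sides, this gives genuine equality of complexes, not just containment. The one thing worth being slightly careful about: the key step that a basis decomposes as a disjoint union $B_1\sqcup B_2$ is \emph{not} automatic from the definition of matroid union (cf.\ Remark~\ref{rem:disjoint-bases}); you correctly obtain it from the matching structure, since $|B|=n+3=|T|$ forces the matching covering $B$ to be a bijection onto $T$. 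That is exactly the right justification, and it is the detail the paper's terser proof glosses over.
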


\begin{proof}
Let $U$ be any facet of $\Delta_n$. 
Whether $U$ has profile 
$(3,2,1,1,\ldots,1)$ or $(2,2,2,1,\ldots,1)$, it must must contain at least one element from  $\{ x_{i1}, x_{i2}, x_{i3} \}$ for each $i=1,\ldots,n$.
In other words, $U$ must contain a basis $B$ of the direct sum $\mathcal{U}_{1,n}^{\oplus n}$.
Now, since the remaining 3 elements of $U \setminus B$ form a basis of $\mathcal{U}_{3,3n}$, we have that $U$ is a basis in $\mathcal{U}_{3,3n}\vee \mathcal{U}_{1,n}^{\oplus n}.$
A similar proof shows that any face of $\Delta_n$ is a union of independent sets in $\mathcal{U}_{3,3n}$ and $\mathcal{U}_{1,n}^{\oplus n}.$
Since the class of transversal matroids is closed under union (\cite[Corollary 12.3.8]{oxley}), this completes the proof.
\end{proof}

The structure of $\Delta_n$ as a transversal matroid can be seen by explicitly constructing a bipartite graph $G = (S \sqcup T , E)$ with $\Delta_n \cong \mathcal{M} [G, S]$, according to the following rule: set vertices $S=\{ x_{11}, \ldots , x_{n3} \},$ $T=\{ a,b,c \} \sqcup \{ v_1, \ldots , v_n \} $, and edges
\[
E = \displaystyle\bigcup_{\substack{1\le i \le n \\ 1\le j \le 3}}\{
x_{ij} v_i,  
x_{ij} a , 
x_{ij} b ,
x_{ij} c
\} .
\]
In matroid theory, it is well-known that every transversal matroid is representable over a sufficiently large (finite) field (cf.~\cite[Ch.~6]{oxley}.)
On the other hand, the matroids $\Delta_n$ are not realizable over arbitrary fields. 
For example, starting from $\Delta_4$, we may delete the variables $x_{21}, x_{31}, x_{41}$ and contract the variables $x_{11},x_{12},x_{22},x_{32}, x_{42}$ to obtain the uniform matroid $\mathcal{U}_{2,4}$ on the remaining variables $x_{13}, \ldots , x_{43}$ as a minor.
This implies that $\Delta_n$ for $n\ge 4$ is not representable over the field with two elements.
In particular, the matroid $\Delta_n$ is non-graphic~\cite[Chapter 5]{oxley}.

\subsection{Applying the Huang-Larson theorem}

Let $M_n \subset \CC^{3n}$ denote the affine cone over the multiview variety from $n$ generic cameras and let $I_n$ denote its vanishing ideal. By Lemma \ref{lem:project-to-C}, it is sufficient to prove that for each facet $U\in\D_n$ the projection $\pi_U:M_n\to (\CC^1)^U$ is dominant. 

First, we prove that it suffices to check that the projection is dominant for a small number of facets.     The structure of the simplicial complex $\D_n$ is highly symmetric. There is a natural permutation action of the symmetric group $\fS_n$ on both the simplicial complex and the polynomial ring $\CC[x_1, \ldots, x_n]$ that corresponds to the permutation of cameras. For each permutation $\t\in \fS_n$ and face $U = \{x_{ij}\}$ of $\D_n,$ we write $\t U$ to denote the face $\{x_{\t(i)j}\}$ of $\D_n.$ Note that by Theorem \ref{thm:facet-types}, if $U$ is a facet of $\D_n$ then so is $\t U$ for each permutation $\t\in\fS_n$ and any facet $W\in \D_n$ can be written as $\t U$ for some $\t\in\fS_n$ and some facet $U$ with profile $(3,2,1,1,\ldots, 1)$ or $(2,2,2,1,\ldots, 1).$ 

    The symmetric group $\fS_n$ acts on the polynomial ring $\CC[x_1, \ldots, x_n]$ in the same way and we see that the $k-$focals of the camera configuration $(A_1,\ldots, A_n)$ exhibit similar symmetries. For each polynomial $f \in \CC[x_1, \ldots, x_n],$ we write $\t f$ to denote the polynomial achieved by replacing $x_{ij}$ with $x_{\t(i)j}.$ Note that if $f$ is a $k-$focal arising as a maximal minor from some choice of subset $\s\subset [n],$ then $\t f$ is a $k-$focal arising as a maximal minor from the subset $\t(\s)\subset[n].$
    
    We now use these symmetries to reduce the number of computations needed to prove that $2-,3-,4-$focals form a universal Gr\"obner basis of the multiview ideal.

    \begin{lemma}\label{lem:symmetry-reduction}
    It suffices to prove that the map~\eqref{eq:Uprojection} is dominant for facets with profile $(3,2,1,1,\ldots, 1)$ and $(2,2,2,1,\ldots, 1)$.
    \end{lemma}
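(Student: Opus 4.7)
The plan is to exploit the natural action of the symmetric group $\fS_n$ on everything in sight. First I would observe that $\fS_n$ acts on $\CC^{3n}$ by permuting the coordinate triples: for $\tau \in \fS_n$, set $\tau \cdot x_{ij} := x_{\tau(i)j}$. Under this action, the affine multiview variety $M_n$ associated with the configuration $(A_1, \ldots, A_n)$ is carried to the multiview variety associated with the permuted configuration $(A_{\tau^{-1}(1)}, \ldots, A_{\tau^{-1}(n)})$. Since the genericity condition on the $4\times 4$ minors of $[A_1^\T \mid \cdots \mid A_n^\T]$ is symmetric in the blocks $A_i$, the permuted configuration is again generic.

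Next I would note that for any facet $U \in \Delta_n$ and any $\tau \in \fS_n$, the set $\tau U$ is again a facet of $\Delta_n$ (the complex is $\fS_n$-invariant by the description of profiles in Theorem~\ref{thm:facet-types}). Moreover, the projections $\pi_U$ and $\pi_{\tau U}$ for the respective configurations fit into a commutative square whose vertical maps are the coordinate-permutation isomorphisms induced by $\tau$. Consequently, dominance of $\pi_U$ for the configuration $(A_1, \ldots, A_n)$ is equivalent to dominance of $\pi_{\tau U}$ for the permuted configuration $(A_{\tau^{-1}(1)}, \ldots, A_{\tau^{-1}(n)})$.

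Finally, by Theorem~\ref{thm:facet-types}, every facet of $\Delta_n$ is $\fS_n$-equivalent to one whose profile is either $(3,2,1,1,\ldots,1)$ or $(2,2,2,1,\ldots,1)$. Since Theorem~\ref{thm:multiview} is quantified over every generic configuration, it suffices to establish dominance of $\pi_U$ for facets of these two profiles for an arbitrary generic configuration. The only subtlety is that the symmetry changes the variety (via the permuted camera tuple) rather than fixing it, but the quantification over all generic configurations absorbs this harmlessly.
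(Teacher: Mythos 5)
Your proof is correct and rests on the same fundamental observation as the paper's — $\fS_n$ acts on the facets and, by Theorem~\ref{thm:facet-types}, every facet is in the orbit of one with profile $(3,2,1,\ldots,1)$ or $(2,2,2,1,\ldots,1)$ — but you route the argument through geometry rather than ideal membership. The paper proceeds algebraically: given $g \in I_n \cap \CC[x_{ij} : ij \in W]$ with $W = \tau U$, it writes $g = \sum h_\alpha f_\alpha$ in terms of focals, applies $\tau^{-1}$, and invokes Lemma~\ref{lem:nothing supported on U}. This requires (i) knowing that the focals generate $I_n$, a fact which is borrowed from prior work rather than self-contained, and (ii) the assertion that $\tau^{-1} f_\alpha$ is ``still a focal,'' which is slightly imprecise since $\tau^{-1} f_\alpha$ is a focal of the \emph{permuted} configuration $(A_{\tau(1)}, \ldots, A_{\tau(n)})$, not of the original one. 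Your version sidesteps both points: the $\fS_n$-equivariance of the coordinate permutation directly intertwines $\pi_U$ for one generic configuration with $\pi_{\tau U}$ for the permuted configuration, and your explicit remark that ``the quantification over all generic configurations absorbs this harmlessly'' is exactly the observation that makes the paper's implicit use of genericity rigorous. In short, you reach the same reduction, but your route is somewhat cleaner and more self-contained.
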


    \begin{proof}
        By Theorem \ref{thm:facet-types}, each facet of $\D_n$ corresponds to some permutation of either $(3, 2,1 ,1\ldots, 1)$ or $(2,2,2,1,\ldots, 1).$ Thus, if $W$ is a facet of $\D_n$ there exists a facet $U$ with profile $(3,2,1,1,\ldots, 1)$ or $(2,2,2,1\ldots, 1)$ and a permutation $\t\in\fS_n$ such that $W = \t U.$ 

        By Lemma~\ref{lem:project-to-C} it suffices to work with 
        $\pi_U$. 
        Suppose that the projection $\pi_U$ is dominant for all facets $U$ with profile $(3,2, 1, 1,\ldots, 1)$ and $(2,2,2,1,\ldots, 1).$ Let $W = \t U.$ Choose $g$ in the intersection $I_n \cap \CC[x_{ij}:ij\in W]$ where $I_n$ is the multiview ideal. So, $g = \sum_{\a} h_\a f_\a$ where each $f_\a$ is a $2-,3-,$or $4-$focal. Note that $\t^{-1}\circ g = \t^{-1}\circ \sum_{\a}h_\a f_\a\in I_n \cap \CC[x_{ij}: ij\in U]$ since $\t^{-1}f_\a$ is still a $2-,3-,4-$focal for each focal $f_\a$ and each permutation $\t.$ Since $\pi_U$ is assumed to be dominant, it follows from Lemma~\ref{lem:nothing supported on U} that $g = 0.$ Thus, it suffices to show that $\pi_U$ is dominant when $U$ is a facet with profile $(3,2,1,1,\ldots, 1)$ or $(2,2,2,1,\ldots, 1),$ as desired.
    \end{proof}

{\em Proof of Theorem \ref{thm:multiview}.} By Lemmas \ref{lem:project-to-C} and \ref{lem:symmetry-reduction}, it is sufficient to check that \eqref{eq:Uprojection} is a dominant map for facets with profile $(3,2,1,1,1,\ldots, 1)$ and $(2,2,2,1,1,\ldots, 1)$.

Assume the statement holds for up to $n-1$ cameras. Now suppose we have $n$ cameras. Without loss of generality, we assume that the $n$th camera contributes one index to a facet $U\in\Delta_n$. Suppose this variable is $x_{nj}$. 
Choose a generic point $p=(p_1, \ldots, p_{n-1},p_n)$ in the affine space $(\CC^1)^U$, where $p_i$ corresponds to an entry in $U$ for camera $i$. Since $p$ is generic, then so is $q = (p_1, \ldots, p_{n-1})$, so that by induction there is a generic point $a = (a_1, \ldots, a_{n-1}) \in M_{n-1}$ that projects to $q$ under $\pi_W$ where $W$ is obtained from $U$ by dropping the last entry. 

Since $a \in M_{n-1}$ there is some point in $\PP^3$ that is imaged to $a_i$ by camera $A_i$ for $1 \leq i \leq n-1$. Let $x(a)\in \CC^4$ be a nonzero point for which
$A_i x(a)$ and $a_i$ are scalar multiples for all $i=1,\ldots , n -1.$ 
Since $(A_1,\ldots , A_n)$ is generic, note that each row of $A_n$ must be nonzero. 
Furthermore, since $a$ is generic, we may assume the $j^{th}$ coordinate of $A_n x(a)$ is some nonzero scalar $c.$
By construction, %
$$(p_1, \ldots, p_{n-1}, c) = \left(\pi_W (a), e_j^T A_n x(a) \right)
\in \pi_U (M_n)$$
Since $M_n$ is a cone in each factor, it follows that $$
(p_1, \ldots, p_{n-1}, p_n)=(p_1, \ldots, p_{n-1}, (p_n/c)\cdot c) \in \pi_U (M_n),
$$
completing the proof that $\pi_U$ is dominant.

We verified %
that the statement holds in the base case of $n=4$ using Macaulay2. Thus, by induction on the number of cameras, the set of $2-,3-, 4-$focals forms a universal Gr\"obner basis of the multiview ideal $I_n$ for any $n\geq 4.$

\section{The universal multiview ideal}
\label{sec:universal multiview ideals}

For $n$ unknown cameras, the universal multiview variety consists of all points $(A_1, \ldots, A_n, p_1, \ldots, p_n)\in (\PP^{3\times 4-1})^n\times (\PP^2)^n$ such that for each $i=1, \ldots, n$ there is some world point $q\in \PP^3$ such that $A_i q = p_i.$ Let $\tM_n$ be the affine cone over the universal multiview variety and $\tI_n$ the vanishing ideal of $\tM_n.$ Note that the difference between $M_n$ and $\tM_n$ is that the former lies in $(\PP^2)^n$ and its vanishing ideal $I_n$ lies in 
$\CC[x_1, \ldots, x_n]$ where $x_i$ consists of the three variables corresponding to the $i$th camera plane, while $\tM_n$ lies in 
$(\PP^{3\times 4-1})^n\times (\PP^2)^n$ and its vanishing ideal $\tI_n$ lies in the polynomial ring with $12n$ variables coming from the $n$ cameras  $A_1, \ldots, A_n$ and the $3n$ variables of the image plane coordinates $x_1, \ldots, x_n$.

In this section, we use the Huang-Larson theorem to show that the set of all $2-,3-, 4-$focals of $n$ unknown camera matrices and image points in the polynomial ring $\CC[\bA, \bx]$ form a universal Gr\"obner basis for the universal multiview ideal $\tI_n.$ It was shown in 
\cite[Theorem 3.2]{atlaspaper} that the  $2-,3-, 4-$focals form a Gr\"obner basis for $\tI_n$ under certain product orders. Thus, in particular, these focals generate $\tI_n$. Our result that the $2-,3-, 4-$focals form a universal Gr\"obner basis of $\tI_n$ strengthens \cite[Theorem 3.2]{atlaspaper}, and is a 
new result.

\subsection{The simplicial complex} For $n\geq 4,$ let $\tD_n$ be the simplicial complex whose nonfaces are generated by the spreads of $2-,3-, 4-$focals in the polynomial ring $\CC[\bA, \bx].$ 
Recall that a $k-$focal is a maximal minor of the matrix \eqref{eq:kfocal matrix}. Both $\D_n$ and $\tD_n$ have nonfaces generated by the spreads of $2-,3-, 4-$focals, except that 
in $\tD_n$ (and $\tI_n$), the entries of $\bA$ are variables. 
Regardless, we expect a relationship between the facets and nonfaces of $\D_n$ and $\tD_n.$ 
We describe a simple process for generating all facets of $\tD_n$ from the facets of $\D_n.$ 

Identify the ground set of  $\tD_n$ with the union of the variables in $\bA$ and $\bx$ coming from all $n$ cameras -- a total of $15n = 12n+3n$ elements.The spread of a polynomial $f \in \CC[\bA,\bx]$ is identified with the set of $\bA$ and $\bx$ variables present in $f$ while profiles, as before, will record the number of $\bf x$ variables from the $n$ cameras.
The following observations will help us understand $\tD_n$.

\begin{lemma}\label{lem:facets-observations}
    If $U\in\tD_n$ is a facet, then the following three conditions hold:
    \begin{enumerate}
        \item If $x_{ij}\notin U$ then $a_{ijk}\in U$ for all values of $k=1,2,3,4$.
        \item There are exactly $n+3$ variables $x_{ij}\in U$ such that $a_{ijk}\in U$ for all values of $k=1,2,3,4$. Moreover, these $\bx-$variables form a facet of $\D_n.$
        \item If $U$ contains more than $n+3$ $\bx-$variables then (2) holds and for each remaining  $x_{ij} \in U$ there is exactly one value of $k$ such that $a_{ijk}\notin U.$
        \end{enumerate}
    It follows that $\tD_n$ is a pure $(13n+2)-$dimensional simplicial complex.
\end{lemma}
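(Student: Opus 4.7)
The proof hinges on a structural observation about the spreads of $2$-, $3$-, and $4$-focals in $\CC[\bA, \bx]$. Performing Laplace expansion along the $\bx$-columns of the focal matrix~\eqref{eq:kfocal matrix}, a $k$-focal $f$ associated to cameras $\sigma\subseteq [n]$ and row sets $R_i \subseteq \{1,2,3\}$ (with $|R_i|\ge 2$ for every $i\in \sigma$, else the focal is a monomial multiple of a lower-degree focal) has
\[
\spr f = \{x_{i,r} \mid i\in \sigma,\ r\in R_i\}\ \cup\ \{a_{i,r,k} \mid i\in \sigma,\ r\in R_i,\ k \in \{1,2,3,4\}\}.
\]
In particular, the $\bA$-variables appear in \emph{packets} of four indexed by pairs $(i,r)$, and
\[
x_{ij}\in \spr f \ \Longleftrightarrow \ a_{ij1},a_{ij2},a_{ij3},a_{ij4}\in \spr f \ \Longleftrightarrow \ a_{ijk}\in \spr f\text{ for some }k.
\]
From this packet observation, (1)--(3) follow by standard exchange arguments on the facet $U$.

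For (1), assume $x_{ij}\notin U$ while $a_{ijk}\notin U$ for some $k$. Since $U$ is a facet, $U\cup\{a_{ijk}\}$ must contain the spread of some focal $f$, with $a_{ijk}\in \spr f$. The packet observation then forces $x_{ij}\in \spr f \subseteq U\cup\{a_{ijk}\}$, hence $x_{ij}\in U$, a contradiction.

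For (2), define $V := \{x_{ij}\in U \mid a_{ij1},\ldots, a_{ij4}\in U\}$. The set $V$ is a face of $\D_n$: if $V$ contained the $\bx$-spread of some focal $f$, the packet observation would place all of $\spr f$ inside $U$, contradicting $U\in \tD_n$. To show $V$ is a facet of $\D_n$, suppose instead that $V\cup\{x_{i_0j_0}\}$ is a face of $\D_n$ with $x_{i_0j_0}\notin V$. We try to enlarge $U$. If $x_{i_0j_0}\notin U$, part~(1) puts the full $(i_0,j_0)$-packet into $U$, so we add $x_{i_0j_0}$ to $U$; any obstructing focal $f$ must have $x_{i_0j_0}\in\spr f$, and by the packet observation each other $\bx$-variable of $\spr f$ carries its full $\bA$-packet in $U$ and therefore lies in $V$. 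If instead $x_{i_0j_0}\in U\setminus V$, some $a_{i_0j_0k_0}\notin U$, and we add $a_{i_0j_0k_0}$ to $U$; any obstructing focal $f$ must contain both $a_{i_0j_0k_0}$ and, by packets, $x_{i_0j_0}$, while every other $\bx$-variable in $\spr f$ again lies in $V$. In either case the $\bx$-spread of $f$ is contained in $V\cup\{x_{i_0j_0}\}$, contradicting that this set is a face of $\D_n$. Hence $V$ is a facet of $\D_n$, and Theorem~\ref{thm:facet-types} gives $|V|=n+3$.

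For (3), suppose $x_{ij}\in U\setminus V$; by definition of $V$, some $a_{ijk_1}\notin U$. If a second $a_{ijk_2}\notin U$ with $k_2\ne k_1$ existed, then attempting to add $a_{ijk_1}$ to $U$ would produce a focal $f$ with $a_{ijk_1}\in\spr f\subseteq U\cup\{a_{ijk_1}\}$; the packet observation would force $a_{ijk_2}\in \spr f\subseteq U\cup\{a_{ijk_1}\}$, contradicting $a_{ijk_2}\notin U$. So exactly one $k$ is missing. Finally, writing $|U\cap \bx|=n+3+m$ with $m\ge 0$, the $n+3$ elements of $V$ contribute $4(n+3)$ $\bA$-variables to $U$ by~(2), the remaining $m$ $\bx$-variables of $U$ contribute $3m$ by~(3), and the $2n-3-m$ $\bx$-variables outside $U$ contribute $4(2n-3-m)$ by~(1), for a total of $12n-m$, giving $|U|=13n+3$ independent of $m$. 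Thus $\tD_n$ is pure of dimension $13n+2$. The main obstacle is the two-case exchange in~(2): part~(1) is needed within the first case, while the packet observation is the crucial bridge in the second.
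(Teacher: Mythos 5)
Your proof is correct and relies on the same crucial \emph{packet observation} about focal spreads---namely, that for any $k$-focal $f$ one has $x_{ij}\in\spr f$ if and only if $a_{ijk}\in\spr f$ for all $k$---which is also the backbone of the paper's proof. However, your route from there is genuinely different. The paper proceeds by a case analysis on $U_\bx := U\cap\{\bx\text{-variables}\}$: it branches on whether $U_\bx$ is a face of $\D_n$, and when $U_\bx$ is a nonface it subdivides by cardinality and uses the profile classification from Theorem~\ref{thm:facet-types} to rule out the small cases. You instead prove (1), (2), and (3) via uniform exchange arguments directly on the facet $U$: for each claim you attempt to enlarge $U$ by a single variable, invoke facethood to produce an obstructing focal $f$, and use the packet observation to locate the $\bx$-spread of $f$ inside a set you have already certified as a face of $\D_n$. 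This yields a cleaner, case-free treatment of (2) (where the two subcases $x_{i_0j_0}\notin U$ versus $x_{i_0j_0}\in U\setminus V$ are handled by the same obstruction pattern) and dispenses with the paper's profile-based argument for small $U_\bx$. Both proofs still rely on Theorem~\ref{thm:facet-types}, but you use it only at the very end of (2) to conclude $|V|=n+3$ once $V$ is known to be a facet, whereas the paper needs the profile analysis earlier. Your closing count ($12n-m$ camera variables plus $n+3+m$ image variables, independent of $m$) matches the paper's purity conclusion. One minor presentational note: in Case~2 of your facet argument you write ``any obstructing focal $f$ must\ldots'' as though the existence of $f$ is automatic; strictly, either $U\cup\{a_{i_0j_0k_0}\}$ is already a face (contradicting facethood of $U$), or there is such an $f$ and your argument applies---both alternatives lead to the desired contradiction, but it is worth making the dichotomy explicit.
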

\begin{proof}

The nonfaces  of $\tD_n$ are generated by the spreads of $2-,3-,$ and $4-$focals with profiles $(3,3,0,0,\ldots,0),$ $(3,2,2,0\ldots, 0)$ and $(2,2,2,2,\ldots, 0)$ up to permutation of cameras. 
Any such focal, $f$, is the determinant of a submatrix of \eqref{eq:kfocal matrix} with $4+k$ chosen rows. It will be convenient to think of $f$  as a polynomial in all the $\bx$-variables from the chosen rows with 
(symbolic) coefficients the  $4\times 4$ minors of the camera rows appearing in these chosen rows. Note that each  camera variable from the chosen rows appears in at least one $4\times 4$ determinant and hence, in the spread of $f$.  This means that 
\begin{enumerate}
    \item if $x_{ij}\in \spr f,$ then $a_{ijk}\in \spr f$ for all $k=1, 2, 3, 4$, and 
    \item if $x_{ij}\notin\spr f,$ then $a_{ijk}\notin\spr f$ for all $k=1,2,3,4.$
\end{enumerate}
From this, we conclude that if $U\in\tD_n$ then the collection of variables $x_{ij}\in U$ such that $a_{ijk}\in U$ for all $k=1,2,3,4$ must form a face of $\D_n.$ Otherwise, $U$ contains the spread of some $k-$focal. In particular, $U$ has at most $n+3$ such $\bx-$variables.

We now consider the faces of $\tD_n.$ Choose $U\in\tD_n$ and let $U_\bx$ be the collection of $\bx-$variables in $U.$ There are two cases to consider: either $U_\bx\in\D_n$ or $U_\bx\notin\D_n.$

Suppose that $U_\bx\in\D_n.$ Then, $U$ does not contain the $\bx-$spread of a $2-,3-,$ or $4-$focal, and the same is true for $U \cup \{\bA\}$. It follows that $U\cup\{\bA\}\in\tD_n.$ Moreover, if $U$ is a facet of $\tD_n$ then it must be that $U_\bx$ is a facet of $\D_n$. In particular, there are $n+3$ variables $x_{ij}\in U$, as well as all the $\bA$ variables, and  (1) and (2) hold.

Now, suppose that $U_\bx\notin\D_n.$ Assume the $U_\bx$ contains $n+3$ variables. Recall that any facet of $\D_n$ also contains $n+3$ variables and the profile of each facet is nonzero in every entry. It follows that $\type(U_\bx)$ has at least one entry equal to $0$ since any other valid way to arrange $n+3$ $\bx-$variables leads to the profile of a facet. So, there exists some index $i$ such that $x_{ij}\notin U$ for all choices of $j.$ Consider $W=U\cup\{x_{i1}\}.$ Since $U$ does not contain the spread of any $k-$focal, if $W$ contains the spread of the $k-$focal $f$ then the $i^{th}$ entry of $\type(f)$ must be equal to 1. However, this implies that $f$ is a monomial multiple of a generating focal. So, since $U$ does not contain the spread of a focal, neither does $W$ and therefore $W\in\tD_n.$ It follows that $U$ is not a facet of $\tD_n.$ Using the same argument, if $U_\bx$ contains fewer then $n+3$ variables then $U$ is not a facet of $\tD_n.$ We conclude that if $U\in\tD_n$ is a facet, then $U_\bx$ contains at least $n+4$ variables.

    Assume $U_\bx$ contains at least $n+4$ variables. 
    As discussed above, the variables $x_{ij}\in U_\bx$ such that $a_{ijk}\in U$ for all $k=1,2,3,4$ form a face of $\D_n.$ For the remaining $\bx-$variables, there is at least one value of $k$ such that $a_{ijk}\notin U.$ If $U$ is a facet, and thus maximally supported, there are exactly $n+3$ variables in $U_\bx$ such that $a_{ijk}\in U$ for all values $k=1,2,3,4$ and these $\bx-$variables form a facet of $\D_n.$ Moreover, there is at exactly one value of $k$ such that $a_{ijk}\notin U$ for each remaining $\bx-$variable. For each $x_{ij}\notin U$ we have that $a_{ijk}\in U$ for all values of $k=1,2,3,4$ again because  $U$ is maximally supported. %

    Given each of the properties (1), (2), and (3), a counting argument shows that any facet of $\tD_n$ contains $13n+3$ variables and thus $\tD_n$ is a  $(13n+2)-$dimensional pure simplicial complex.
\end{proof}

From the proof of Lemma~\ref{lem:facets-observations} we see that if $U$ is a facet of $\D_n$ then the union $U\cup\{\bA\}$ of $U$ with all variables corresponding to entries in all $n$ camera matrices is a facet of $\tD_n.$ Moreover, every facet of $\tD_n$ has the same size. 
We now 
show that all facets of $\tD_n$ can be found using a simple recursive process.

\begin{theorem}\label{thm:generating-facets}
    Every facet of $\tD_n$ can be found using the following process:
    \begin{enumerate}
        \item Choose a facet $U\in \D_n$ and set $U_0 := U\cup\{\bA\}.$
        \item For $\ell\geq 1,$ choose $x_{ij}\notin U_{\ell-1}$ and set $U_\ell := (U_{\ell-1}\cup\{x_{ij}\})\setminus\{a_{ijk}\}$ for some $a_{ijk}.$
    \end{enumerate}
\end{theorem}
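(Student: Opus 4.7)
The plan is to exploit the structural characterization of facets already provided by Lemma \ref{lem:facets-observations} and realize an arbitrary facet $U\in\tD_n$ as the end product of the recursive process. Given such a $U$, I would first isolate the canonical $\D_n$-facet sitting inside it by setting
\[
U' := \{x_{ij} \in U : a_{ij1}, a_{ij2}, a_{ij3}, a_{ij4} \in U\}.
\]
By part (2) of Lemma \ref{lem:facets-observations}, $U'$ is a facet of $\D_n$ of size $n+3$. Taking $U_0 := U' \cup \{\bA\}$ gives a legitimate seed for the process.

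Next I would write $U$ as the result of a swap sequence starting from $U_0$. Let $E := U \setminus (U' \cup \{\bA\})$ be the ``extra'' $\bx$-variables in $U$. Part (3) of Lemma \ref{lem:facets-observations} attaches to each $x_{ij} \in E$ a unique index $k(i,j) \in \{1,2,3,4\}$ with $a_{ijk(i,j)} \notin U$, and the map $x_{ij} \mapsto a_{ijk(i,j)}$ is a bijection between $E$ and the $\bA$-variables missing from $U$: indeed, by part (1), any $a_{ijk} \notin U$ forces $x_{ij} \in U$, and since such an $x_{ij}$ is missing an $\bA$-variable it cannot lie in $U'$. Fixing any enumeration $E = \{x_{i_1 j_1}, \ldots, x_{i_\ell j_\ell}\}$, I would then define
\[
U_m := (U_{m-1} \cup \{x_{i_m j_m}\}) \setminus \{a_{i_m j_m k(i_m, j_m)}\}
\quad \text{for } m = 1, \ldots, \ell,
\]
which is precisely an instance of the recursive step. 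A direct telescoping argument shows $U_\ell = U$, since the added $\bx$-variables exhaust $E$ and the removed $\bA$-variables exhaust $\{\bA\}\setminus U$.

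The main obstacle is only mild bookkeeping: I need that each swap is legal ($x_{i_m j_m} \notin U_{m-1}$ and $a_{i_m j_m k(i_m, j_m)} \in U_{m-1}$) and that no $\bA$-variable is stripped twice. Both facts reduce to the uniqueness clause in Lemma \ref{lem:facets-observations} (3) together with the distinctness of the $x_{ij}$'s in $E$. Beyond these checks, the proof is essentially a repackaging of Lemma \ref{lem:facets-observations}; the substantive combinatorial work has already been carried out there.
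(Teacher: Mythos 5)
Your proposal is correct and follows essentially the same approach as the paper's proof: extract the canonical $\D_n$-facet $U'$ from a given facet of $\tD_n$ via Lemma~\ref{lem:facets-observations}(2), then realize the remaining $\bx$-variables as a swap sequence whose removed $\bA$-variables are precisely those absent from $U$. The paper also briefly verifies the converse direction (that each $U_\ell$ produced by the process is indeed a facet, using purity), a point the theorem's phrasing does not strictly require but which you may wish to add for completeness.
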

\begin{proof}
    By the previous discussion, $U_0$ is a facet of $\tD_n$ for each choice of $U.$  Moreover,  each $U_\ell$ is a face of $\tD_n.$ Since $\tD_n$ is pure and $U_\ell$ is the same size as $U_0,$ we see that $U_\ell$ is also facet. 
    Thus, we only need to show that the process is exhaustive.

    Choose some facet $W\in\tD_n.$ By Lemma~\ref{lem:facets-observations}, if $x_{ij}\notin W,$ then $a_{ijk}\in W$ for all values of $k$ and there are exactly $n+3$ variables $x_{i_1j_1}, \ldots, x_{i_{n+3}j_{n+3}}\in W$ with $a_{ijk}\in W$ for all $k.$ Set $U=\{x_{i_1j_1}, \ldots, x_{i_{n+3}j_{n+3}}\}$ and $U_0= U\cup\{\bA\}.$ If $W$ contains exactly $n+3$ of the $\bx-$variables, then $U_0=W.$ Suppose then that $W$ contains $n+3+m$ of the $\bx-$variables with $m>0.$ Then, for each $x_{i_{n+3+\ell}j_{n+3+\ell}}\in W$ there exists some value of $k_\ell=1,2,3,4$ such that $a_{ijk_\ell}\notin W.$ So, setting $U_\ell = U_{\ell-1}\cup\{x_{i_{n+3+\ell}j_{n+3+\ell}}\}\setminus\{a_{ijk_\ell}\},$ we see that $W = U_{m}.$ Thus, we have constructed $W$ using the desired process.
\end{proof}

Analogously to~Theorem~\ref{thm:matroid-multiview}, we now describe the
matroid structure of $\widetilde{\Delta}_n.$
\begin{theorem}\label{thm:matroid-universal-multiview}
The simplicial complex $\widetilde{\Delta}_n$ is a  transversal matroid of rank $13n+3$, isomorphic to the union of the uniform matroid $\mathcal{U}_{3,15n}$ on all variables and the direct sum $\mathcal{U}_{13,15}^{\oplus n}$ over the $n$ subsets $\{ x_{i1}, x_{i2}, x_{i3} , a_{11}, \ldots , a_{34}\}$.
\end{theorem}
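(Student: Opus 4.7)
\emph{Plan.} I will mirror the proof of Theorem~\ref{thm:matroid-multiview}. The essential step is showing that every facet of $\widetilde{\Delta}_n$ decomposes as a pairwise-disjoint union of bases from the summands of the claimed matroid union; Remark~\ref{rem:disjoint-bases} then promotes such a union to a basis. A similar analysis extends to arbitrary faces, and the conclusion that $\widetilde{\Delta}_n$ is the claimed transversal matroid follows by closure under union \cite[Corollary 12.3.8]{oxley}. The main obstacle will be a combinatorial calculation of the size of a facet $U$ within each camera block, which hinges on a per-block lower bound $k_i \geq 1$ coming from Theorem~\ref{thm:facet-types}.

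Given a facet $U$ of $\widetilde{\Delta}_n$, by Lemma~\ref{lem:facets-observations}(2) the $n+3$ ``good'' $\bx$-variables---those $x_{ij} \in U$ with all four $a_{ijk} \in U$---form a facet $F$ of $\Delta_n$. Write $k_i$ for the number of good $\bx$-variables from camera $i$; since every facet profile of $\Delta_n$ is positive in every coordinate by Theorem~\ref{thm:facet-types}, we have $k_i \geq 1$ and $\sum_i k_i = n+3$. Letting $S_i := \{x_{i1}, x_{i2}, x_{i3}, a_{i11}, \ldots, a_{i34}\}$ denote the $i$-th camera block (of size $15$), a short bookkeeping using all three clauses of Lemma~\ref{lem:facets-observations} shows that $|U \cap S_i| = 12 + k_i$: good $\bx$-variables contribute five elements each to $U \cap S_i$ (themselves plus four $a$-variables), ``extra'' $\bx$-variables in $U$ contribute four (one $a$-variable omitted), and $\bx$-variables not in $U$ still contribute four (all four $a$-variables present).

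To decompose $U$, pick any subset $B_i \subseteq U \cap S_i$ of size $13$ for each $i$---possible since $12 + k_i \geq 13$---and let $B_0 := U \setminus (B_1 \sqcup \cdots \sqcup B_n)$, which has size $\sum_i (k_i - 1) = 3$. Each $B_i$ is a basis of the summand $\mathcal{U}_{13,15}$ on $S_i$, $B_0$ is a basis of $\mathcal{U}_{3,15n}$, and Remark~\ref{rem:disjoint-bases} identifies $U = B_0 \sqcup B_1 \sqcup \cdots \sqcup B_n$ as a basis of the union matroid. An analogous but simpler argument---not requiring a tight disjointness count---handles arbitrary faces of $\widetilde{\Delta}_n$, yielding the claimed identification.
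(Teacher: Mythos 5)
Your argument for the forward containment is correct and takes a genuinely different route from the paper's. The paper threads through the recursive description of facets in Theorem~\ref{thm:generating-facets}, deducing that each $U_\ell$ is a basis of the matroid union by tracking a decomposition along the recursion and leaning on Theorem~\ref{thm:matroid-multiview}. You instead extract the decomposition directly: the per-block count $|U\cap S_i|=12+k_i$ with $k_i\ge 1$ (forced by the positivity of facet profiles of $\Delta_n$) and $\sum_i k_i = n+3$ lets you peel off a $13$-element basis of $\mathcal U_{13,15}$ in each block, leaving exactly $3$ elements for $\mathcal U_{3,15n}$, and Remark~\ref{rem:disjoint-bases} closes the loop. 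This is clean and, on this direction, arguably tighter than the paper's recursion.

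The gap is that you prove only one containment. Showing every facet (and face) of $\widetilde{\Delta}_n$ lands in the union matroid gives $\widetilde{\Delta}_n\subseteq\mathcal U_{3,15n}\vee\mathcal U_{13,15}^{\oplus n}$ as simplicial complexes, but it does not ``yield the claimed identification'': a pure subcomplex of a matroid, even one of the same rank, need not coincide with it (it need not even be a matroid). Your appeal to closure of transversal matroids under union shows that the \emph{target} is transversal, not that $\widetilde{\Delta}_n$ equals it. The paper's proof explicitly addresses this in its final sentence, asserting that every basis of the matroid union is one of the facets $U_\ell$ produced by Theorem~\ref{thm:generating-facets}; your write-up is missing any version of this reverse direction, e.g.\ an argument that every independent set of the union avoids the spread of every generating $k$-focal (equivalently, that each generating nonface of $\widetilde{\Delta}_n$ is dependent in the union). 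Without it, the conclusion does not follow.
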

\begin{proof}
Note first that every facet $U_0=U \cup \{ \mathbf{A} \}$ appearing in~Theorem~\ref{thm:generating-facets} is a basis of the described matroid union. Indeed,~Theorem~\ref{thm:matroid-multiview} implies $U=U' \cup U ''$, where $U '$ is a basis of $\mathcal{U}_{3,3n}$ and $U''$ is a basis of $\mathcal{U}_{1,3}^{\oplus n}$.
Furthermore, since $U''\cup \{ \mathbf{A} \}$ is a basis of $\mathcal{U}_{13,15}^{\oplus n}$ disjoint from $U',$ we deduce that $U_0$ is a basis of
$\mathcal{U}_{3,15n} \vee \mathcal{U}_{13,15}^{\oplus n} $.

Next, observe that any of the facets $U_\ell$ constructed in~Theorem~\ref{thm:generating-facets} is a basis in the matroid union $\mathcal{U}_{3,15n} \vee \mathcal{U}_{13,15}^{\oplus n}.$
For example, we have 
\[
U_1 =\left( U_0 \cup \{ x_{ij} \} \right)\setminus \{ a_{ijk} \} 
=
U' \cup 
\left( U'' \cup \{ \mathbf{A} \} \setminus \{ a_{ijk} \} \cup \{ x_{ij} \}\right),
\]
with $U'$ and $U''$ as in the preceding paragraph, and similarly for $\ell \ge 2.$
Finally, combining observations from Lemma~\ref{lem:facets-observations} and Theorem~\ref{thm:generating-facets}, all facets of the matroid union $\mathcal{U}_{3,15n} \vee \mathcal{U}_{13,15}^{\oplus n}$ have the form $U_\ell$ for some choice of $U_0$ and $\ell .$
\end{proof}

\subsection{Applying the Huang-Larson theorem} Our goal now is to prove that the $2-,3-, 4-$focals form a universal Gr\"obner basis of $\tI_n$. By~Lemma~\ref{lem:project-to-C}, it is sufficient to show that for each facet $U\in\tD_n$ the projection %
$\pi_U:\tM_n \to(\CC^1)^U$ is dominant.
Since the number of facets of $\tD_n$ is large, even in the case where $n=4,$ it is necessary to reduce the dominance checks to a limited number of facets. To this end, we show that we need only consider facets of $\tD_n$ corresponding to facets of $\D_n$ followed by a symmetry reduction analogous to~Lemma~\ref{lem:symmetry-reduction}.

 \begin{lemma}\label{lem:facets-to-check}
    It suffices to prove that $\pi_U$ is dominant for all facets $U$ of the form $U = W \cup\{\bA\}$ where $W$ is a facet of $\D_n.$
 \end{lemma}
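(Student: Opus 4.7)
The plan is to deduce dominance of $\pi_U$ for an arbitrary facet $U$ of $\widetilde{\Delta}_n$ from the hypothesized dominance at facets of the form $U_0 = W \cup \{\mathbf{A}\}$, leveraging the recursive description of facets given by Theorem~\ref{thm:generating-facets}. Any facet $U$ appears as the terminal entry $U_m$ of a chain $U_0, U_1, \ldots, U_m = U$ where each $U_\ell = (U_{\ell-1} \cup \{x_{ij}\}) \setminus \{a_{ijk}\}$ results from swapping one camera coordinate for one image coordinate sharing the same $(i,j)$-index. I would induct on $\ell$, taking the base case $\ell = 0$ directly from the hypothesis of the lemma, and showing at each inductive step that dominance of $\pi_{U_{\ell-1}}$ implies dominance of $\pi_{U_\ell}$.

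The key algebraic input is the defining equation of $\widetilde{M}_n$: for a point $P = (A_1, \ldots, A_n, x_1, \ldots, x_n) \in \widetilde{M}_n$, there exists $q \in \CC^4$ with $x_{ij} = \sum_{m=1}^4 a_{ijm} q_m$ for all $i,j$. This means that if we modify a single entry $a_{ijk}$ of $A_i$ by $\delta$ while leaving $q$ and every other camera entry alone, we obtain a new point of $\widetilde{M}_n$ in which $x_{ij}$ has changed by $\delta \cdot q_k$ and \emph{every other} $x_{i'j'}$ is unchanged (for $i'\ne i$ the row is unaffected; for $i' = i, j' \ne j$ the relevant row of $A_i$ is untouched).

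For the inductive step, given a generic point $p \in \CC^{U_\ell}$, I would first choose a generic value $a^* \in \CC$ and form $p' \in \CC^{U_{\ell-1}}$ whose coordinate in position $a_{ijk}$ equals $a^*$ and whose coordinates on $U_{\ell-1} \setminus \{a_{ijk}\} = U_\ell \setminus \{x_{ij}\}$ agree with $p$. Since $p$ is generic and $a^*$ is an independent generic choice, $p'$ is generic in $\CC^{U_{\ell-1}}$. By the inductive hypothesis $\pi_{U_{\ell-1}}$ is dominant, so we can find $P \in \widetilde{M}_n$ with $\pi_{U_{\ell-1}}(P) = p'$. Setting the new camera entry to $a_{ijk}^{\text{new}} := a^* + (p_{x_{ij}} - x_{ij}(P))/q_k(P)$ yields a modified point $P^{\text{new}} \in \widetilde{M}_n$ whose $x_{ij}$-coordinate is now $p_{x_{ij}}$ and whose other coordinates on $U_\ell$ are unchanged. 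Thus $\pi_{U_\ell}(P^{\text{new}}) = p$, completing the inductive step.

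The main obstacle is managing the genericity bookkeeping: one needs $q_k(P) \ne 0$ in order to invert the linear relation and solve for the necessary increment. Since $q_k = 0$ is a proper closed condition on $\widetilde{M}_n$ (the generic lift of a camera/image configuration to a world point has all coordinates nonzero), it cannot be forced by a generic $p'$, and the inductive hypothesis applied to the generic $p'$ produces a generic $P$ in a fiber of $\pi_{U_{\ell-1}}$ where $q_k(P) \ne 0$. Once this genericity is set up carefully, the remaining verifications---that the modified point lies on $\widetilde{M}_n$ and that the only coordinate of $U_\ell$ affected by the adjustment is $x_{ij}$---follow directly from the linear structure highlighted above.
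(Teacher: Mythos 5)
Your proposal follows the same route as the paper: iterate the single-step swap from Theorem~\ref{thm:generating-facets}, at each stage adjusting one camera entry $a_{ijk}$ so as to force the target value of $x_{ij}$ while leaving every other coordinate indexed by $U_\ell$ unchanged, and using genericity to keep $q_k \ne 0$. One small slip worth correcting: the defining condition of $\widetilde{M}_n$ is not $x_{ij} = \sum_m a_{ijm} q_m$ on the nose, but rather $A_i q = \lambda_i x_i$ for some per-camera scalar $\lambda_i$ (the constraint is projective), so your increment $(p_{x_{ij}} - x_{ij}(P))/q_k(P)$ is missing a factor of $\lambda_i$; the paper's modified entry is precisely $a_{ijk}' + \lambda_i'(q_{ij}-p_{ij}')/y_k$. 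This is easily repaired by carrying $\lambda_i$ through (or by rescaling $q$ so that $\lambda_i = 1$ for the camera being modified), after which your argument coincides with the paper's.
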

\begin{proof}
    Assume that for some facet $U'$ of $\tD_n$ the projection $\pi_{U'}$ is dominant. Let $U = (U'\cup\{x_{ij}\})\setminus \{a_{ijk}\}$ for some values of $i,j,k.$
    
    Choose a generic point $Q \in (\CC^1)^U$. We will argue that there is a point $P \in \tM_n$ such that $\pi_U(P) = Q$. We consider a generic point $Q' \in (\CC^1)^{U'}$ obtained from $Q$ by deleting the entry in position $x_{ij}$ and adding an entry in position $a_{ijk}$. By assumption, there is a point $P' \in \tM_n$ such that $\pi_{U'}(P') = Q'$. 
    The projection $\pi_U(P')$ agrees with $Q$ in all positions except the one indexed by $x_{ij}$. Note that the entry of $P'$ in position $a_{ijk}$ is irrelevant for the projection $\pi_U$ since $U$ does not contain $a_{ijk}$. We use this flexibility to construct $P \in \tM_n$, from $P'$ with $\pi_U(P) = Q$. 

    Suppose $P' = (A_1', \ldots, A_n', p_1', \ldots, p_n')$ and $q_{ij}$ is the entry of $Q$ in position $x_{ij}$. We modify $p_i'$ in the $j$th position to obtain a preimage for $Q$ in $\tM_n$ under $\pi_U$.
Since $P' \in \tM_n$, there is some $y$ and scalars $\lambda_\ell'$ such that $ A_\ell' y = \lambda_\ell' p_\ell'$ for $1 \leq \ell \leq n$. %
Modify $P'$ to $P$ by replacing $A_i'$ with $A_i$ as follows: replace the $jk$ entry of $A_i'$ with $A_{ijk} = \frac{a_{ijk}'y_k +  \lambda_i' (q_{ij} - p_{ij}')}{y_k}.$ Then, $(A_i y)_j = \l_i'q_{ij}$ and so $\pi_{U}(P) = Q$. 
It follows that $\pi_{U} \,:\, \tM_n \rightarrow (\CC^1)^U$ is dominant. 

By Theorem~\ref{thm:generating-facets},  we now need only check that the projections $\pi_U$ are dominant for facets of the form %
$U = W\cup\{\bA\}$ for some facet $W$ of $\D_n,$ as desired.
\end{proof}
The result of Lemma~\ref{lem:facets-to-check} greatly reduces the number of facets that must be considered to check the hypotheses of Theorem~\ref{thm:huang-larson}. We can further reduce the number of facets to be checked using symmetry, as in Lemma~\ref{lem:symmetry-reduction}.
\begin{lemma}\label{lem:symmetry-universal}
    In Lemma~\ref{lem:facets-to-check}, we can further restrict to facets $W$ of $\D_n$ with profile $(3,2,1,1,\ldots, 1)$ and $(2,2,2,1,\ldots, 1).$ 
\end{lemma}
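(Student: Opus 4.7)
The plan is to mirror the argument in Lemma~\ref{lem:symmetry-reduction}, exploiting the $\fS_n$-action on cameras, but now on the larger polynomial ring $\CC[\bA, \bx]$. The same symmetric group $\fS_n$ acts by simultaneously permuting indices of both cameras and image points: for $\t\in\fS_n$ we set $a_{ijk}\mapsto a_{\t(i)jk}$ and $x_{ij}\mapsto x_{\t(i)j}$. Since the universal multiview variety $\tM_n$ is defined by the condition that each pair $(A_i,p_i)$ is consistent with a common world point, and this condition is invariant under permuting the $n$ pairs, the ideal $\tI_n$ is $\fS_n$-invariant. Moreover, under this action the full set of camera variables $\{\bA\}$ is permuted within itself, and the set of $k$-focals is also preserved.

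With the action set up, I would proceed as follows. By Lemma~\ref{lem:facets-to-check}, it suffices to show $\pi_U$ is dominant for every facet of the form $U = W\cup\{\bA\}$ with $W$ a facet of $\D_n$. By Theorem~\ref{thm:facet-types}, any such $W$ equals $\t W_0$ for some $\t\in\fS_n$ and some facet $W_0$ of $\D_n$ with profile $(3,2,1,1,\ldots,1)$ or $(2,2,2,1,\ldots,1)$. Since $\t\{\bA\} = \{\bA\}$, we have
\[
U = \t W_0 \cup \{\bA\} = \t(W_0 \cup \{\bA\}) = \t U_0,
\]
where $U_0 := W_0\cup\{\bA\}$.

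Now suppose $\pi_{U_0}$ is dominant for every facet $U_0 = W_0\cup\{\bA\}$ with $W_0$ of one of the two standard profiles. By Lemma~\ref{lem:nothing supported on U}, this is equivalent to $\tI_n \cap \CC[v : v\in U_0] = 0$. Given any $g\in \tI_n \cap \CC[v : v\in U]$, the polynomial $\t^{-1}\!\cdot g$ lies in $\tI_n$ (by $\fS_n$-invariance of $\tI_n$) and is supported on the variables indexed by $U_0$. Hence $\t^{-1}\!\cdot g = 0$, so $g = 0$, and Lemma~\ref{lem:nothing supported on U} gives dominance of $\pi_U$, as required.

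No significant obstacle is anticipated; the argument is a direct transcription of Lemma~\ref{lem:symmetry-reduction} to the larger ring, with the only point that needs checking being that the $\fS_n$-action preserves both $\tI_n$ and the distinguished variable subset $\{\bA\}$. Both are immediate from the definitions, so the proof should be short.
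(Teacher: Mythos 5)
Your proof is correct and follows the same route the paper intends (the paper's own proof is a one-line pointer to the argument in Lemma~\ref{lem:symmetry-reduction}), but you spell it out explicitly and in doing so notice a genuine simplification available in the universal setting: because the camera entries are themselves variables, the ideal $\tI_n$ is honestly $\fS_n$-invariant under the simultaneous action on $\bA$- and $\bx$-indices, so one can conclude $\t^{-1}\!\cdot g \in \tI_n$ directly, without the decomposition $g = \sum_\a h_\a f_\a$ into focals that the paper's multiview argument uses (there the ideal depends on a fixed camera tuple and is not $\fS_n$-invariant on the nose). Your observation that $\t$ fixes the variable set $\{\bA\}$, so $\t U_0 = \t W_0 \cup \{\bA\}$ remains of the required form, is the one point that needs to be said and you say it; this is a clean, complete argument.
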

\begin{proof}
   This follows from the same argument as in~Lemma~\ref{lem:symmetry-reduction}
\end{proof}

{\em Proof of~Theorem~\ref{thm:universal-multiview}} By~Lemmas~\ref{lem:symmetry-universal} and~\ref{lem:facets-to-check}, it suffices to check that \eqref{eq:Uprojection} is a dominant map for facets of $\tD_n$ of the form $U=W\cup\{\bA\}$ where $W\in\D_n$ is a facet with profile $(3,2,1,1,\ldots, 1)$ or $(2,2,2,1,\ldots,1).$

Suppose the statement holds for $\tI_{n-1}$ and we now have $n$ cameras. This $n^{th}$ camera contributes $12n$ camera variables, and without loss of generality, one variable $x_{nj}$ to the facet $U\in\tD_n.$ Choose a generic point $Q = (B_1, \ldots, B_n, q_1, \ldots, q_n)\in (\CC^1)^U.$ Since $Q$ is generic, so is the point $Q' = (B_1, \ldots, B_{n-1}, q_1, \ldots, q_{n-1})\in (\CC^1)^{U'}$ where $U'$ is attained from $U$ by removing each variable corresponding to the $n^{th}$ camera. Note that $U'$ is also of the form $U'=W'\cup\{\bA_{n-1}\}$ for a facet $W'\in \D_{n-1}.$ By induction, there exists $(B_1, \ldots, B_{n-1}, p_1, \ldots, p_{n-1})\in\tM_{n-1}$ that projects to $Q'$ under $\pi_{U'}.$ We can assume that the cameras are exactly $B_1, \ldots, B_{n-1}$ since $U'$ contains all camera variables. In particular, there exists $y\in\C^4$ and scalars $\l_i$ such that for $1\leq i\leq n-1,$ we have that %
$B_i y= \l_ip_i.$ Since $B_n, y$ are generic, we may assume that the $j^{th}$ coordinate of $B_ny = c\neq 0.$ By construction,
\begin{align*}
    (B_1, \ldots, B_{n-1}, B_n, q_1, \ldots, q_{n-1}, c) \in \pi_{U}(\tM_n).
\end{align*}
Then, since $\tM_n$ is a cone in each factor, 
\begin{align*}
    Q = (B_1, \ldots, B_n, q_1, \ldots, q_n)  = (B_1, \ldots, B_n, q_1, \ldots, (q_n/c)\cdot c)\in\pi_U(\tM_n),
\end{align*}
completing the proof that $\pi_U$ is dominant.

\section{Conclusion}\label{sec:conclusion}

In conclusion, this paper demonstrates that Theorem~\ref{thm:huang-larson} is a useful tool for proving universal Gr\"{o}bner basis results in the emerging field of \emph{algebraic vision}~\cite{kileel2022snapshot}. 
Specifically, we have obtained a new proof of the previously-known result Theorem~\ref{thm:multiview}, and an entirely new result in Theorem~\ref{thm:universal-multiview}.
We are optimistic that the techniques developed here can be adapted to other problems; in particular, to families of ideals appearing in~\cite{atlaspaper}, such as the resectioning varieties studied in~\cite{connelly2025algebra}.

\bibliographystyle{amsplain}
\bibliography{refs}

\end{document}